\algnewcommand{\LineComment}[1]{\Statex \hskip\ALG@thistlm {\color{gray}\texttt{// #1}}}
\definecolor{OliveGreen}{HTML}{3C8031}
\title{Choosability of multipartite hypergraphs}
\date{}
\author{\lsstyle Peter~Bradshaw}
\email{pb38@illinois.edu}
\author{\lsstyle Abhishek~Dhawan}
\email{adhawan2@illinois.edu}
\author{\lsstyle Nhi~Dinh}
\email{nhidinh2@illinois.edu}
\author{\lsstyle Shlok~Mulye}
\email{mulye2@illinois.edu}
\author{\lsstyle Rohan~Rathi}
\email{rarathi2@illinois.edu}
\address{\normalfont{}\textls{Department of Mathematics, University of Illinois Urbana-Champaign, Urbana, IL, USA}}
\thanks{Peter Bradshaw and Abhishek Dhawan received funding from NSF RTG grant DMS-1937241.}
\newtheoremstyle{bfnote}%
{}{}%
{\slshape}{}%
{\bfseries}{\bfseries.}%
{ }%
{\thmname{#1}\thmnumber{ #2}\thmnote{ \ep{\normalfont{}#3}}}
\theoremstyle{bfnote}
\newtheorem{theorem}{Theorem}[section]
\newtheorem*{theorem*}{Theorem}
\newtheorem{prop}[theorem]{Proposition}
\newtheorem*{prop*}{Proposition}
\newtheorem{claim}[theorem]{Claim}
\newtheorem{conj}[theorem]{Conjecture}
\newtheorem*{corl*}{Corollary}
\theoremstyle{definition}
\newtheorem{defn}[theorem]{Definition}
\newtheorem*{defn*}{Definition}
\newtheorem*{exmp*}{Example}
\theoremstyle{remark}
\newtheorem*{ques*}{Question}
\newtheorem*{remk*}{Remark}
\newcommand*{\myproofname}{Proof of claim}
\newenvironment{claimproof}[1][\myproofname]{\begin{proof}[#1]}{\end{proof}}
\newcommand{\N}{{\mathbb{N}}}
\renewcommand{\P}{\mathbb{P}}
\renewcommand{\epsilon}{\varepsilon}
\newcommand{\eps}{\epsilon}
\renewcommand{\phi}{\varphi}
\renewcommand{\theta}{\vartheta}
\renewcommand{\leq}{\leqslant}
\renewcommand{\geq}{\geqslant}
\newcommand{\defeq}{\coloneqq}
\newcommand{\bemph}[1]{{\normalfont#1}} 
\newcommand{\ep}[1]{\bemph{(}#1\bemph{)}} 
\newcommand{\emphdef}[1]{\textbf{\textit{{#1}}}}
\newcommand{\emphd}[1]{\emphdef{#1}}
\newcommand{\LLL}{\text{Lov\'asz Local Lemma}\xspace}
\titleformat{\subsection}[block]{\bfseries}{\thesubsection.}{1ex}{}
\titleformat{\subsubsection}[runin]{\itshape}{\bfseries\upshape\thesubsubsection.}{1ex}{}[.---]
\titleformat{\section}[block]{\scshape\filcenter}{\thesection.}{1ex}{}
\titlespacing*{\section}{0pt}{*3}{*1}
\titlespacing*{\subsection}{0pt}{*3}{*1}
\titlespacing*{\subsubsection}{0pt}{*1.5}{*0}
\setlist{topsep=3pt,itemsep=3pt}
\begin{document}

\sloppy

\vspace*{0pt}

\maketitle

\begin{abstract}
    A $k$-uniform hypergraph (or $k$-graph) $H = (V, E)$ is $k$-partite if $V$ can be partitioned into $k$ sets $V_1, \ldots, V_k$ such that each edge in $E$ contains precisely one vertex from each $V_i$. We show that $k$-partite $k$-graphs of maximum degree $\Delta$ are $q$-choosable for  $q \geq \left(\frac{4}{5}(k-1 + o(1))\Delta/\log \Delta\right)^{1/(k-1)}$. Our proof yields an efficient randomized algorithm for finding such a coloring, which shows that the conjectured algorithmic barrier for coloring pseudorandom $k$-graphs does not apply to $k$-partite $k$-graphs.
\end{abstract}

\section{Introduction}\label{section: intro}

\subsection{Background and results}\label{subsection: background}

All hypergraphs considered in this paper are finite and undirected. 
A \emphd{$k$-uniform hypergraph} (or \emphd{$k$-graph}) is an ordered pair $(V, E)$, where $V$ is a set of vertices and $E$ is a collection of $k$-element subsets of $V$ called edges.
For each $v \in V$, we let $E_H(v)$ denote the edges containing $v$, $N_H(v)$ denote the set of vertices contained in the edges in $E_H(v)$ apart from $v$ itself, $\deg_H(v) \defeq |E_H(v)|$, and $\Delta(H) \defeq \max_{u\in V}\deg_H(u)$.
We drop the subscript when $H$ is clear from context.
In this paper, we study multipartite hypergraphs, a natural extension of bipartite graphs to the hypergraph setting.

\begin{defn}\label{def: multipartite}
    A $k$-uniform hypergraph $H = (V, E)$ is \emphd{$k$-partite} if there exists a partition $V \defeq V_1\sqcup \cdots \sqcup V_k$ such that each edge $e \in E$ contains precisely one vertex from each set $V_i$.
\end{defn}

Multipartite hypergraphs find a wide array of applications in satisfiability problems, Steiner triple systems, and particle tracking in physics, to name a few.
In this work, we investigate the behavior of the choice number of such hypergraphs.
Before we state our main result, we establish
a few definitions.
An \emph{independent set} in 
a hypergraph
$H = (V, E)$ is a set $J \subseteq V$ that spans no edges, i.e., $e \not \subseteq J$ for each $e \in E$.
A \emph{proper coloring} of $H$ is an assignment of integers (referred to as colors) 
to the vertices of $H$ such that each color class (a maximal set of vertices assigned the same color) is an independent set.
The \emphd{chromatic number} of $H$, denoted by $\chi(H)$, is the minimum number of colors required for a proper coloring of $H$.
A \emph{list assignment} for $H$ is a function $L\,:\,V(H) \to 2^\N$\footnote{For a set $X$, we let $2^X$ denote its power set.} that assigns a set $L(v)$ of colors to each vertex $v \in V(H)$, represented by a set of nonnegative integers.
Given a list assignment $L$ for $H$, a \emph{proper $L$-coloring} of $H$ is a proper coloring in which
each vertex $v$ receives a color from its list $L(v)$.
For $q\in \N$, we say that $H$ is \emphd{$q$-choosable} if $H$ admits a proper $L$-coloring with respect to every list assignment $L$ satisfying $|L(v)| \geq q$ for each $v$.
The \emphd{choice number} of $H$, denoted by $\ch(H)$, is the minimum value $q$ such that $H$ is $q$-choosable.

When considering a $k$-partite $k$-graph $H$ with a vertex partition $V(H) = V_1 \sqcup \cdots \sqcup V_k$, $\bigcup_{i \in J}V_i$ is an independent set for every $J \subsetneq [k]$, and a trivial $2$-coloring of $H$ exists; namely, color the vertices in $V_1$ with color $1$, and color all other vertices with color $2$.
The choice number of $H$, however, can be much larger than $2$.
For $k = 2$, Erd\H{o}s, Rubin, and Taylor showed that $\ch(K_{n, n}) = (1+o(1))\log_2n$\footnote{Throughout this work, we use the standard asymptotic notation $O(\cdot)$, $\Omega(\cdot)$, $o(\cdot)$, etc.}
in the same paper that they introduced the choice number \cite{erdos1979choosability}.
Alon and Krivelevich also showed that Erd\H{o}s--R\'enyi random bipartite graphs satisfy $\ch(G_{\mathrm{bip}}(n, p)) = (1\pm o(1))\log_2(np)$ with high probability for a wide range of $p$ \cite{alon1998choice}.
This led them to pose the following conjecture:

\begin{conj}[{\cite[Conjecture~5.1]{alon1998choice}}]\label{conj:ak}
    For any bipartite graph $G$ of maximum degree at most $\Delta$, we have $\ch(G) = O(\log \Delta)$.
\end{conj}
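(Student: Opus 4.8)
Conjecture~\ref{conj:ak} remains open; what follows is the line of attack I would pursue and the point at which I expect it to stall. Fix a bipartition $V(G) = A \sqcup B$, put $q = C\log\Delta$ for a large constant $C$, and let $L$ be a list assignment with $|L(v)| \ge q$ for every $v$. As a warm-up, note that the weaker bound $\ch(G) = O(\log |V(G)|)$ is automatic: every bipartite graph on $n$ vertices embeds into $K_{n,n}$, the choice number is monotone under taking subgraphs, and $\ch(K_{n,n}) = (1+o(1))\log_2 n$ by Erd\H{o}s--Rubin--Taylor \cite{erdos1979choosability}. The content of the conjecture is thus to replace this \emph{global} parameter ($n$) by the \emph{local} one ($\Delta$).

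The obvious plan is a two-phase colouring: colour $A$ first --- it is an independent set, so any choice from the lists is proper --- and then extend to $B$, which succeeds exactly when every $b \in B$ still sees a colour of $L(b)$ unused on $N(b) \subseteq A$. Were $A$ coloured independently and uniformly from the lists, a fixed $c \in L(b)$ would survive on $N(b)$ with probability roughly $\prod_{a \in N(b)}(1 - 1/|L(a)|) \le e^{-\deg(b)/q}$, which for $q = C\log\Delta$ can be as tiny as $e^{-\Delta/(C\log\Delta)}$; so for some $b$ essentially every colour of $L(b)$ is blocked, and the naive scheme collapses. The \LLL fails for the same reason --- the events ``$b$ is uncolourable'' have probability near $1$, not near $0$. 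One must therefore colour $A$ in a \emph{correlated} way that actively spreads the colours apart so that no vertex of $B$ is saturated, which is exactly what the semi-random (R\"odl nibble) method does; using only the triangle-freeness of $G$ it produces $q$ of order $\Delta/\log\Delta$, which is the order that the $k=2$ case of this paper's main theorem achieves. But it cannot do better, since there exist triangle-free graphs of maximum degree $\Delta$ with chromatic number $\Omega(\Delta/\log\Delta)$.

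The real task, then, is to exploit the \emph{global} structure of a bipartite graph --- its $2$-colourability --- and not merely its local sparsity. Concretely I would attempt an entropy-compression argument (or an algorithmic version of the \LLL): repeatedly assign an uncoloured vertex a uniformly random available colour, backtrack and record the history the moment some vertex is left with no available colour, and then show that history admits a short encoding, forcing the process to halt --- and hence a proper $L$-colouring to exist --- once $q \ge C\log\Delta$. The \textbf{main obstacle}, I expect, is precisely where every earlier attempt has stalled: a vertex $b \in B$ interacts with up to $\Delta$ vertices of $A$, and the events that these neighbours receive colours blocking $b$ overlap heavily and are not individually small, so there is no usable union bound, no admissible \LLL parameter regime, and no clean compression estimate --- and crucially none of these obstructions can tell that $G$ is bipartite rather than just triangle-free. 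Producing an argument sensitive to the $2$-partition is, I believe, what the conjecture really demands; tellingly, the present paper avoids the issue altogether by passing to $k$-partite $k$-graphs with $k \ge 3$, where blocking a single edge requires $k-1$ \emph{coordinated} colour choices and is therefore rare enough to be controlled --- which is what lets the exponent improve to $1/(k-1)$.
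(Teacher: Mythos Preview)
There is nothing to compare here: Conjecture~\ref{conj:ak} is stated in the paper as an \emph{open} conjecture, and the paper does not prove it (nor claim to). Your proposal correctly begins by saying the conjecture remains open and then offers an informal discussion of obstacles, which is the appropriate response.

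Two small corrections to your commentary. First, the paper does not ``avoid the issue altogether by passing to $k$-partite $k$-graphs with $k \ge 3$'': Theorem~\ref{theorem: constant inside} is stated and proved for all $k \ge 2$, and the $k=2$ case recovers the bipartite bound $\ch(G) \le \frac{4}{5}(1+o(1))\Delta/\log\Delta$ of~\cite{bradshaw2024bipartite}. The paper simply does not attempt the $O(\log\Delta)$ bound for any $k$. Second, your description of why the analysis is easier for larger $k$ is slightly off: the gain is not that the paper exploits edges requiring $k-1$ coordinated colour choices in some qualitatively new way, but rather that the same $\Delta/\log\Delta$ order for $q^{k-1}$ translates into a $(k-1)$-st root when solved for $q$. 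The barrier you identify---that current methods only see local sparsity and not the global $2$-colourability---is the right one, and it applies equally to all $k$ in Conjecture~\ref{conj: k unif}.
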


While the conjecture is still open, there has been progress in recent years.
Alon, Cambie, and Kang showed that $\ch(G) \leq (1+o(1))\Delta/\log \Delta$ in \cite{alon2021asymmetric} (see also \cite{cambie2022independent} for an extension of the result to correspondence coloring), which matches the so-called \textit{shattering threshold} in coloring random graphs (we discuss this further in \S\ref{subsection: prior work}).
Very recently, Mohar, Stacho, and the first named author of this manuscript improved upon the above to show the following:

\begin{theorem}[{\cite[Theorem~1.2]{bradshaw2024bipartite}}]\label{theo: bipartite 4/5}
   The following holds for $\Delta$ sufficiently large.
   Let $G$ be a bipartite graph of maximum degree at most $\Delta$. 
   Then,
   \[\ch(G) < \frac{4}{5} \cdot \frac{\Delta}{\log \Delta}.\]
\end{theorem}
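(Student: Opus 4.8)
\medskip
\noindent\emph{Proof proposal.}
The plan is to prove the equivalent statement that $G$ is $q$-choosable for every integer $q < \tfrac{4}{5}\Delta/\log\Delta$, with $\Delta$ large. First I would make the standard reductions: discarding surplus colors and, if necessary, padding $G$ with extra vertices, edges and colors, one may assume $G$ is $\Delta$-regular and bipartite with parts $A \sqcup B$, that $|L(v)| = q$ for every $v$, and (handling components separately) that $G$ is connected. Since $A$ and $B$ are independent, an $L$-coloring of $G$ is exactly a map $\phi\colon A \to \bigcup_a L(a)$ with $\phi(a) \in L(a)$ such that every $b \in B$ keeps an \emph{available} color, some $c \in L(b) \setminus \phi(N(b))$; one then colors $B$ greedily. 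So it suffices to produce such a $\phi$.

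I would obtain $\phi$ from the Lov\'asz Local Lemma applied to the bad events $X_b \defeq \{\, L(b) \subseteq \phi(N(b)) \,\}$, $b \in B$. Since $X_b$ depends only on the colors of $N(b)$, it is mutually independent of all $X_{b'}$ except those (fewer than $\Delta^2$ in number) with $N(b) \cap N(b') \neq \0$, so it suffices to force $\Pr[X_b] \leq 1/(e\Delta^2)$ for each $b$. If each $\phi(a)$ is chosen uniformly and independently from $L(a)$, then, writing $n_b(c) \defeq |\{a \in N(b) : c \in L(a)\}| \leq \Delta$, we have $\Pr[c \notin \phi(N(b))] = (1-1/q)^{n_b(c)}$, and since the events $\{c \in \phi(N(b))\}$ over $c \in L(b)$ are negatively correlated,
\[
\Pr[X_b] \;\leq\; \prod_{c \in L(b)}\!\bigl(1-(1-1/q)^{n_b(c)}\bigr) \;\leq\; \exp\!\Bigl(-\!\sum_{c \in L(b)}(1-1/q)^{n_b(c)}\Bigr) \;\leq\; \exp\!\bigl(-q\,(1-1/q)^{\Delta}\bigr).
\]
In the range of interest $(1-1/q)^{\Delta} = (1+o(1))e^{-\Delta/q}$, so the last quantity is essentially $\exp(-q\,e^{-\Delta/q})$, which falls below $\Delta^{-2}$ only when $q \geq (1+o(1))\Delta/\log\Delta$. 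This recovers the Alon--Cambie--Kang bound, and the product measure does no better: $q\,e^{-\Delta/q} = o(1)$ once $q < \Delta/\log\Delta$.

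To go below the $(1+o(1))$ threshold I would exploit the single lossy step above, the replacement of $n_b(c)$ by $\Delta$, which is tight only when $L(a) = L(b)$ for every neighbor $a$ of $b$. The middle bound $\Pr[X_b] \leq \exp(-\sum_c (1-1/q)^{n_b(c)})$ is much stronger as soon as $n_b(c) \leq \beta\Delta$ for a positive fraction of the colors $c \in L(b)$, with $\beta < 1$ fixed: writing $q = \alpha\Delta/\log\Delta$ one then gets $\Pr[X_b] \leq \exp(-\Omega(q)\,\Delta^{-\beta/\alpha})$, which beats $\Delta^{-2}$ whenever $\alpha > \beta$. So the plan is to split $B$ according to whether $b$'s neighborhood is ``list-homogeneous'' (almost all neighbors carry essentially the list $L(b)$) or not: the non-homogeneous $b$ are handled by this improved random bound, while the homogeneous part is passed to the subgraph spanned by homogeneous edges, on whose pieces the lists are (nearly) constant, say equal to a fixed $q$-set $L_0$; as each side of such a piece is independent, one colors one side using only $q-1$ of the colors of $L_0$, which automatically leaves every vertex on the other side an available color. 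The two colorings would be run simultaneously, the structured one deleting at most one color from each list it meets, and it is the optimization of the homogeneity threshold $\beta$ (together with the notion of ``essentially'' that goes with it) against what the structured phase can absorb that should produce the constant $\tfrac{4}{5}$.

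The hard part, I expect, is this interface. One must check that the list-homogeneous subgraph breaks into list-constant pieces finely enough that each vertex of the colored side is forced to avoid at most $q-1$ colors --- even for vertices between pieces or on the boundary with the non-homogeneous part --- and one must recompute $\Pr[X_b]$ for $b$ having neighbors in the structured part, whose colors are now partly pre-committed rather than uniform, checking that reserving only a bounded number of colors per neighborhood does not spoil the bound. Getting that trade-off to close at $\tfrac{4}{5}$, rather than at a weaker constant, is the delicate point.
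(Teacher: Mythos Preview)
Your proposal diverges from the actual argument at exactly the point where you leave the uniform distribution. Neither this paper nor the original source \cite{bradshaw2024bipartite} decomposes $B$ into homogeneous and non-homogeneous parts, and there is no deterministic structured phase. Instead, one keeps a \emph{single} random coloring of $A$ but replaces the uniform law by a biased one: each $a\in A$ picks $c\in L(a)$ with probability proportional to $1-\tfrac{3}{4q}\,I(L(a),c)$, where $I(L(a),c)$ is the rank of $c$ in $L(a)$, so low-rank colors are favored. The case split you are looking for then lives entirely in the \emph{analysis} of a fixed $b\in B$. Writing $z=\tfrac{1}{\Delta}\sum_{a\in N(b)}|L(a)\cap L(b)|$, one shows that if $z\le 2q/3$ then over all of $L(b)$ the average of $\rho(b,c)=\sum_{a\in N(b)}\P[\phi(a)=c]$ is at most $(\tfrac45+o(1))\Delta/q$; if $z>2q/3$ one restricts to the top $\gamma q$ colors $L^{*}\subseteq L(b)$ by rank and uses that large overlap forces high-rank colors of $L(b)$ to also have high rank in most $L(a)$, hence small $\P_{a,c}$ under the bias, again giving average $\rho\le(\tfrac45+\gamma)\Delta/q$ on $L^{*}$. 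Jensen converts this averaged bound into $\P[\mathcal B_b]\le\exp(-\Delta^{\Omega(1)})$, and the LLL finishes as you describe. The strict $\tfrac45$ (rather than $\tfrac45+\eps$) in the cited theorem comes from a further optimization of the weight function replacing the linear one above.

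Your diagnosis that the lossy step is the replacement of $n_b(c)$ by $\Delta$ is exactly right, but the decomposition plan has a genuine gap. The ``list-homogeneous'' subgraph need not break into list-constant pieces: one can shift the common list by one color along each edge of a high-degree graph so that every edge is locally homogeneous yet no global reserved color exists, and a boundary vertex may have its structured neighbors precommitted to many different reserved colors across many overlapping pieces, not just one. You flag this interface as the hard part, but nothing in the proposal controls it, and nothing explains why the optimized threshold should land at $\tfrac45$ rather than some unspecified constant below $1$. The actual proof sidesteps all of this: the biased distribution makes the homogeneous case (large $z$) good \emph{automatically} for the high-rank colors of $L(b)$, so the dichotomy is purely analytic and there is no second coloring phase and no interface to manage.
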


For $k \geq 3$, Haxell and Verstraete showed that the complete $k$-partite $k$-graph with $n$ vertices in each part (denoted by $K_{k*n}$) satisfies $\ch(K_{k*n}) \leq (1+o_n(1))\log_kn$ \cite{haxell2010list}, extending the aforementioned result of Erd\H{o}s, Rubin, and Taylor on bipartite graphs to the hypergraph setting.
In a similar flavor, M{\'e}roueh and Thomason generalized the result of Alon and Krivelevich to Erd\H{o}s--R\'enyi random $k$-partite $k$-graphs $\mathcal{H}(k, n, p)$ \cite{meroueh2019list}.
Here, each part $V_i$ contains $n$ vertices, and each potential edge is included independently with probability $p$ (a potential edge is a set consisting of exactly one vertex from each $V_i$).
They showed that $\ch(\mathcal{H}(k, n, p)) = \Theta_{k}\left(\log(n^{k-1}p)\right)$ almost surely for a wide range of $p$.
Recently, the second named author of this manuscript extended the result of Alon, Cambie, and Kang on bipartite graphs to all uniformities:

\begin{theorem}[{\cite[Theorem~3]{dhawan2025list}}]\label{theo: old}
    For all $\eps > 0$ and $k \geq 2$, the following holds for $\Delta$ sufficiently large.
    Let $H$ be a $k$-uniform $k$-partite hypergraph of maximum degree at most $\Delta$. 
    Then,
    \[\ch(H)\leq \left((k-1 + \eps)\frac{\Delta}{\log \Delta}\right)^{1/(k-1)}.\]
\end{theorem}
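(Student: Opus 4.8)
I would build the colouring from the following structural observation: in a $k$-partite $k$-graph every edge meets each part $V_i$ in exactly one vertex, so each $V_i$ is independent, and --- since every edge meets $V_1$ --- the set $W\defeq V_2\sqcup\cdots\sqcup V_k$ spans no edge of $H$ at all. This reduces the task: given a list assignment $L$ with $|L(v)|\ge q_0\defeq\big((k-1+\eps)\Delta/\log\Delta\big)^{1/(k-1)}$ for every $v$, it suffices to produce an $L$-colouring $\sigma$ of $W$ such that every $v\in V_1$ retains an \emph{available} colour --- a colour $c\in L(v)$ for which no edge $e\ni v$ has all of $e\setminus\{v\}$ coloured $c$ by $\sigma$ --- because colouring each $v\in V_1$ with an available colour then gives a proper $L$-colouring of $H$: the only edges that could become monochromatic are those through a vertex of $V_1$, and availability rules precisely these out. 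Write $q\defeq\lceil q_0\rceil$ and assume $|L(v)|=q$ for all $v$; note $q^{-(k-1)}\le q_0^{-(k-1)}=\frac{\log\Delta}{(k-1+\eps)\Delta}$. Since $\eps>0$ is arbitrary, it is enough to show $H$ is $q$-choosable.

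The plan is to colour each $w\in W$ independently and uniformly at random from $L(w)$ and apply the symmetric \LLL. For $v\in V_1$ let $B_v$ be the event that $v$ has no available colour. Fix first a single colour $c\in L(v)$: for each of the at most $\Delta$ edges $e\ni v$, the chance that $\sigma$ colours all $k-1$ vertices of $e\setminus\{v\}$ with $c$ is at most $q^{-(k-1)}$, and since these per-edge events are increasing functions of the independent indicators $\{\bbone[\sigma(w)=c]\}_{w\in N(v)}$, Harris's inequality gives
\[
\P[\,c\text{ available for }v\,]\ \ge\ \big(1-q^{-(k-1)}\big)^{\Delta}\ \ge\ \Delta^{-1/(k-1+\eps/2)}
\]
for $\Delta$ large in terms of $k$ and $\eps$, the last step being a routine estimate using $q^{-(k-1)}\le\frac{\log\Delta}{(k-1+\eps)\Delta}=o(1)$ (the fixed slack $\eps/2$ is inherited from the $\eps$ built into $q_0$).

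Here is the step I expect to be the main obstacle. A union bound over the colours $c\in L(v)$ is useless, since each individual colour is available only with probability $o(1)$, so $\sum_c\P[c\text{ not available}]$ exceeds $1$. The remedy is negative association: the family $\{\bbone[\sigma(w)=c]\}_{w\in W,\,c}$ is negatively associated (each vertex of $W$ contributes an independent one-hot vector, and an independent union of negatively associated families is negatively associated), and for distinct colours $c$ the event ``$c$ is not available for $v$'' is an increasing function of the pairwise \emph{disjoint} blocks $\{\bbone[\sigma(w)=c]:w\in W\}$. Hence
\[
\P[B_v]\ =\ \P\big[\text{every }c\in L(v)\text{ is unavailable for }v\big]\ \le\ \prod_{c\in L(v)}\P[\,c\text{ unavailable}\,]\ \le\ \big(1-\Delta^{-1/(k-1+\eps/2)}\big)^{q}.
\]
Since $q\ge q_0\ge(\Delta/\log\Delta)^{1/(k-1)}$ and $\tfrac1{k-1}>\tfrac1{k-1+\eps/2}$, the exponent $q\,\Delta^{-1/(k-1+\eps/2)}$ grows like a fixed positive power of $\Delta$, so $\P[B_v]\le\exp(-\Delta^{c})$ for some $c=c(k,\eps)>0$ and all large $\Delta$. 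This is exactly where the constant $k-1$ in the theorem is pinned down: it is the largest $a$ for which $q\,(1-q^{-(k-1)})^{\Delta}\to\infty$ when $q^{k-1}=a\Delta/\log\Delta$.

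It remains to bound the dependencies. Each $B_v$ is a function of $\sigma$ restricted to $N(v)\subseteq W$, so $B_v$ and $B_{v'}$ can fail to be independent only when $N(v)\cap N(v')\ne\emptyset$; since $|N(v)|\le(k-1)\Delta$ and each vertex of $W$ lies in at most $\Delta$ edges, each carrying a single vertex of $V_1$, the event $B_v$ is mutually independent of all but at most $(k-1)\Delta^2$ of the $B_{v'}$. As $e\,\P[B_v]\,\big((k-1)\Delta^2+1\big)\le e\exp(-\Delta^{c})\big((k-1)\Delta^2+1\big)<1$ for $\Delta$ large, the \LLL\ produces a colouring $\sigma$ of $W$ avoiding every $B_v$, which extends to the required proper $L$-colouring of $H$; thus $H$ is $q$-choosable and $\operatorname{ch}(H)\le q$. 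Replacing the existential \LLL\ by the Moser--Tardos resampling algorithm --- the inequality $e\,\P[B_v]\,((k-1)\Delta^2+1)<1$ holds with vast room to spare --- turns this into a randomized polynomial-time procedure. Beyond the negative-association step, the remaining estimates are routine.
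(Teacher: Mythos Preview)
Your argument is correct and follows essentially the same route the paper describes for Theorem~\ref{theo: old}: colour all but one part uniformly at random, use Harris's inequality (cf.\ Claim~\ref{claim4.1.1}) to lower-bound the survival probability of a single colour, use negative association of the one-hot colour indicators (the paper invokes the identical step as \cite[Claim~4.1.2]{dhawan2025list}) to get $\P[\mathcal{B}_v]\le\prod_c\P[c\text{ unavailable}]$, and finish with the \LLL\ with dependency degree $(k-1)\Delta^2$. The only cosmetic differences are that you extend to $V_1$ rather than $V_k$ and bypass the Jensen step of Proposition~\ref{prop: Probability Lemma} (unnecessary here since $\rho(v,c)\le\Delta q^{-(k-1)}$ uniformly in $c$); the paper's own contribution, the non-uniform distribution on $V_1$ and the $L^*$ selection of Proposition~\ref{prop: Existence Lemma}, is what buys the extra factor $4/5$ in Theorem~\ref{theorem: constant inside}.
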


The above result matches the conjectured computational threshold for coloring random hypergraphs (we discuss this further in \S\ref{subsection: prior work}).
In the same paper, the author posed the following stronger form of Conjecture~\ref{conj:ak}:

\begin{conj}[{\cite[Conjecture~11]{dhawan2025list}}]\label{conj: k unif}
    For all $k \geq 2$, there is a constant $c \defeq c(k) > 0$ such that the following holds for $\Delta$ large enough.
    Let $H$ be a $k$-partite $k$-graph of maximum degree at most $\Delta$.
    Then, we have $\ch(H) \leq c\log \Delta$.
\end{conj}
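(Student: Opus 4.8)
The plan is to attack the conjecture through an iterative semi-random (``nibble'') coloring procedure, tuned to exploit the fact that in a $k$-partite $k$-graph a colored edge obstructs a vertex only through a $(k-1)$-fold coincidence of colors. Assume $\Delta$ is large; by a standard reduction it suffices, given an arbitrary list assignment $L$ with $|L(v)| = q \defeq \lceil c \log \Delta \rceil$ for every $v$, to produce a proper $L$-coloring. Throughout one maintains a partial proper $L$-coloring together with, for each uncolored vertex $v$, a residual list $L'(v)$ (the colors of $L(v)$ not yet forbidden at $v$) and the residual hypergraph $H'$ spanned by the edges all of whose vertices remain uncolored. In one round, each uncolored vertex independently activates with probability $p$ and, if activated, draws a tentative color uniformly from $L'(v)$; one then uncolors a vertex of every edge of $H'$ that becomes monochromatic and deletes from $L'(u)$ any color $c$ such that some edge $e \ni u$ of $H'$ has all its vertices other than $u$ permanently colored $c$. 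The goal is to iterate until $H'$ is sparse enough that a final greedy pass, justified by the \LLL, completes the coloring; the extremal example $K_{k * n}$, with $\Delta = n^{k-1}$ and $\ch(K_{k * n}) = \Theta(\log \Delta)$, shows that any such scheme must be run essentially to the bitter end.

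The quantities to control are the residual list size $\ell(v) \defeq |L'(v)|$ and a ``threat degree'' $t(v) \defeq \sum_{e \ni v,\, e \in H'} \prod_{u \in e \setminus v} \ell(u)^{-1}$, which, up to the factor $p^{k-1}$, is the expected number of edges through $v$ that block a fixed color of $v$ in one round. A fixed color $c \in L'(v)$ is lost in a round only if some edge $e \ni v$ has all $k-1$ of its other vertices colored $c$, an event of probability at most $\prod_{u \in e \setminus v} (p/\ell(u))$, so $\ell(v)$ decays by a relative amount of order $p^{k-1} t(v)$; meanwhile an edge of $H'$ survives only if none of its $k$ vertices is permanently colored, which pulls $t(v)$ down by a factor of order $1 - \Theta(kp)$. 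Since the list decays like $p^{k-1}$ while the threat decays like a constant, the hope is that, with $p$ chosen small, the ratio $\ell(v)^{k-1}/t(v)$ never drops below its starting value; propagating such an invariant through all rounds --- with concentration of $\ell(v)$ and $t(v)$ about their conditional means supplied by a bounded-differences/Azuma estimate inside the \LLL framework, exactly as in the Molloy--Reed and Alon--Cambie--Kang arguments (the dependencies are tame because altering one vertex's choice disturbs only $O(\Delta)$ edges) --- would give the theorem.

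The main obstacle is that this plan, as described, does not reach a logarithmic bound --- already for $k = 2$, where the statement is exactly the Alon--Krivelevich conjecture (Conjecture~\ref{conj:ak}). The initial threat degree satisfies $t_0(v) \leq \Delta\, q^{-(k-1)} = \Delta / (c \log \Delta)^{k-1}$, which for $q = \Theta(\log \Delta)$ is enormous, so the ratio $\ell^{k-1}/t$ one wants to control starts far below $1$, and the round-by-round analysis of the nibble stalls precisely at the shattering threshold, which for $k$-partite $k$-graphs is $\big((k-1+o(1))\Delta/\log \Delta\big)^{1/(k-1)}$, the bound of Theorem~\ref{theo: old} --- essentially the limit of what local, potential-function arguments can see. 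Breaking past it would require genuinely new input: either a more global use of the $k$-partite structure --- say, coloring the parts $V_1, \dots, V_k$ in a staged order and using that $\bigcup_{i \in J} V_i$ is independent for every $J \subsetneq [k]$ to repair monochromatic edges retroactively --- or techniques from the fine-grained theory of random constraint satisfaction that are known to cross the shattering barrier on structured instances. Absent such an ingredient, the methods available here yield only the polynomial-in-$\Delta$ bounds of Theorems~\ref{theo: bipartite 4/5} and~\ref{theo: old} and the refinement announced in the abstract, not the conjectured $O(\log \Delta)$, which is why the statement is recorded as a conjecture rather than a theorem.
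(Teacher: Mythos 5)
The statement you were asked about is Conjecture~\ref{conj: k unif}, which the paper records as an open problem (imported from \cite{dhawan2025list}) and does not prove; its main theorem only reaches $\bigl(\tfrac{4}{5}(k-1+\eps)\Delta/\log\Delta\bigr)^{1/(k-1)}$, which is polynomial in $\Delta$ rather than logarithmic. Your write-up correctly recognizes this: you sketch the natural semi-random nibble, then honestly identify why it stalls --- the initial ``threat degree'' $\Delta/q^{k-1}$ is already enormous when $q=\Theta(\log\Delta)$, so the potential-function invariant fails from the very first round, and local arguments bottom out near the shattering-type threshold of Theorem~\ref{theo: old} (which the paper's non-uniform one-shot distribution improves only by a constant factor inside the $(k-1)$-st root). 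Your observation that $K_{k*n}$ has $\ch=\Theta(\log\Delta)$, so the conjectured bound would be tight, is also consistent with the paper's discussion. In short, there is no proof to compare against, you do not claim to have one, and your diagnosis of the obstruction is accurate; the only thing to flag is that this is not a proof and should not be presented as one.
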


Our main result improves the constant factor in Theorem~\ref{theo: old}, providing an advancement toward Conjecture~\ref{conj: k unif}.

\begin{theorem}\label{theorem: constant inside}
    For all $\eps > 0$ and $k \geq 2$, the following holds for $\Delta$ sufficiently large.
    Let $H$ be a $k$-uniform $k$-partite hypergraph of maximum degree at most $\Delta$. 
    Then,
    \[\ch(H)\leq \left(\frac{4}{5}(k-1 + \eps)\frac{\Delta}{\log \Delta}\right)^{1/(k-1)}.\]
\end{theorem}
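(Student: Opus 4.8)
The plan is to follow the architecture behind Theorem~\ref{theo: old} — color the last part $V_k$ only after $V_1,\dots,V_{k-1}$, and show each vertex of $V_k$ still sees a usable color — but to replace the random coloring of the first $k-1$ parts by the sharper procedure that Bradshaw, Mohar and Stacho used to prove Theorem~\ref{theo: bipartite 4/5}, which is exactly what produces the extra factor $4/5$. We may assume every list has size exactly $q \defeq \left(\tfrac45(k-1+\eps)\Delta/\log\Delta\right)^{1/(k-1)}$, and we write $Q \defeq q^{k-1} = \tfrac45(k-1+\eps)\Delta/\log\Delta$. The key structural feature is that $V_1\cup\cdots\cup V_{k-1}$ is an independent set, and that a color $c\in L(v)$ with $v\in V_k$ is forbidden by a partial coloring of $V_1\cup\cdots\cup V_{k-1}$ only when some edge $e\ni v$ has all $k-1$ of its remaining vertices colored $c$. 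Since this is a $(k-1)$-fold coincidence, it suffices for $q$ to be of order $Q^{1/(k-1)} = (\Delta/\log\Delta)^{1/(k-1)}$, and the whole task is to drive the constant in $Q$ down to $\tfrac45(k-1)$.

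\textbf{Core step.} I would run on $V_1\cup\cdots\cup V_{k-1}$ the refined randomized coloring of \cite{bradshaw2024bipartite} — whether one reads it as a block/reservoir scheme, an iterated ``nibble,'' or an entropy-compression-type process — with parameters tuned so that, for every $v\in V_k$, with probability $1-\Delta^{-\omega(1)}$ the number of colors of $L(v)$ rendered forbidden is at most $(1-\delta)q$ for a fixed $\delta=\delta(\eps,k)>0$; equivalently, $v$ retains at least $\delta q \geq 1$ available colors. The quantitative heart is to show that the saving this procedure yields in the bipartite threshold for $q$ produces the \emph{same} factor-$4/5$ saving in the threshold for $Q=q^{k-1}$: one tracks how the partial coloring of $V_1,\dots,V_{k-1}$ creates ``partially monochromatic'' edges of each color at each $v\in V_k$, and checks that the effective $(k-1)$-fold coincidence probability at termination is $\tfrac{(1+o(1))\,5\log\Delta}{4(k-1+\eps)\Delta}$ rather than the $\tfrac{(1+o(1))\log\Delta}{(k-1+\eps)\Delta}$ given by a plain uniform coloring.

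\textbf{Finish.} Each bad event ``$v\in V_k$ keeps fewer than $\delta q$ available colors'' is determined by the random choices made within a bounded-radius neighborhood of $v$, hence depends on at most $\mathrm{poly}(\Delta)$ others, while it has probability $\Delta^{-\omega(1)}$; the symmetric \LLL, in its algorithmic (Moser--Tardos) form — which is what supplies the efficient randomized algorithm advertised in the abstract — then guarantees an outcome avoiding all of them. Conditioned on this, color $V_k$ greedily, each vertex using one of its remaining available colors. Lower-order error terms are absorbed into $\eps$ (equivalently, into the abstract's $o(1)$), which is what forces the ``$\Delta$ sufficiently large'' hypothesis; for $k=2$ the whole argument collapses to (a reproof of) Theorem~\ref{theo: bipartite 4/5}.

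\textbf{Main obstacle.} The real difficulty lies entirely in the core step: lifting the Bradshaw--Mohar--Stacho refinement from the bipartite regime, where a single colored neighbor forbids a color, to the $k$-partite regime, where forbidding a color demands an \emph{entire} monochromatic edge, \emph{without losing the constant}. Two points need care: (i) if the procedure is iterative (coloring a fraction of $V_1\cup\cdots\cup V_{k-1}$ per round), the count of monochromatic-so-far edges of each color at every $v\in V_k$ must be kept under control across all rounds, not just at the end; and (ii) the gain of \cite{bradshaw2024bipartite}, expressed there through $\Delta$ and $q$, must be recast as a gain in $q^{k-1}$ — it has to ``commute'' with taking a $(k-1)$-st root — and it is precisely this commutation that makes the constant emerge as $\tfrac45(k-1+\eps)$ inside the root rather than something weaker. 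The concentration inequalities and the \LLL bookkeeping, though lengthy, should otherwise be routine.
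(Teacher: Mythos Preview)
Your high-level architecture is right and matches the paper: randomly color $V_1\cup\cdots\cup V_{k-1}$, show each $v\in V_k$ retains a usable color with failure probability $\Delta^{-\omega(1)}$, and finish with the Local Lemma. But the core step is where the content lies, and your proposal does not actually supply it.

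Two concrete gaps. First, the Bradshaw--Mohar--Stacho procedure is not an iterative nibble or entropy compression; it is a \emph{single-shot} random coloring in which each $u$ receives color $c\in L(u)$ with a probability that is an explicit affine decreasing function of the rank of $c$ in $L(u)$. The paper applies this non-uniform distribution to $V_1$ \emph{only} and keeps the uniform distribution on $V_2,\dots,V_{k-1}$. This asymmetry is deliberate: it makes the blocking probability $\P_e(c)$ for an edge $e\ni v$ equal to a single affine factor times $q^{-(k-2)}$, so that the sum $\rho(v,c)=\sum_{e\ni v}\P_e(c)$ remains analytically tractable. Your suggestion to run the refined procedure uniformly across all of $V_1\cup\cdots\cup V_{k-1}$ would replace this by a product of $k-1$ affine factors, and the paper explicitly flags that it is unclear whether such a multi-distribution scheme can even be analyzed.

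Second, and more seriously, the actual mechanism by which the $4/5$ survives the passage to $k$-partite is not the vague ``commutation with the $(k-1)$-st root'' you describe. It is a case split on the parameter $z=\frac{1}{\Delta}\sum_{e\ni v}|L_e|$, the average size of the common list $L_e=\bigcap_{u\in e}L(u)$. When $z\le 2q/3$ one takes $L^*=L(v)$ and uses Jensen on $\sum_e|L_e|^2$; when $z>2q/3$ one takes $L^*$ to be the top $\gamma q$ colors of $L(v)$ and exploits that large $|L_e|$ forces high-rank colors of $L(v)$ to also have high rank in $L(u_1^e)$, hence low assignment probability. In both regimes the average of $\rho(v,c)$ over $L^*$ comes out at most $(\tfrac45+\gamma)\Delta/q^{k-1}$, and Jensen plus Harris plus negative correlation convert this into the required $\exp(-\Delta^{\Omega(1)})$ bound on $\P[\mathcal B_v]$. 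None of this structure appears in your outline; the ``main obstacle'' you flag is real, and the proposal does not indicate how to overcome it.
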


For the case $k = 2$, this result was previously established in \cite[Theorem~2.4]{bradshaw2024bipartite}.
Indeed, our proof is inspired by their approach (see \S\ref{subsection: proof overview} for an overview).
The main result of~\cite{bradshaw2024bipartite} (see Theorem~\ref{theo: bipartite 4/5}) further improves the constant by optimizing a threshold function appearing in the proof.
We do not pursue such refinements here, instead focusing on techniques that extend cleanly to the general $k$-uniform setting.
We note that such refinements would yield only marginal gains in the constant factor of Theorem~\ref{theorem: constant inside}, and would not bridge the gap to the conjectured bound (Conjecture~\ref{conj: k unif}).

\subsection{Relation to prior work}\label{subsection: prior work}

In this section, we discuss related works
on coloring hypergraphs of bounded degree
to better place our result in context.

When considering graphs ($k = 2$), Brooks provided the first improvement upon the greedy bound \cite{brooks1941colouring};
he showed that $\chi(G) \leq \Delta(G)$ unless $G$ is complete or an odd cycle (in which case $\chi(G) = \Delta(G) + 1$).
Reed improved upon this showing that $\chi(G) \leq \Delta(G) - 1$ for sufficiently large $\Delta(G)$, provided $G$ does not contain a clique of size $\Delta(G)$ \cite{reed1999strengthening}.
Johansson provided an asymptotic improvement for $K_3$-free graphs of maximum degree $\Delta$, showing $\chi(G) = O(\Delta/\log\Delta)$ \cite{Joh_triangle}.
Recently, Molloy showed that the leading constant in the $O(\cdot)$ is at most $1 + o(1)$ (see also \cite{bernshteyn2019johansson, martinsson2021simplified, hurley2021first}), which is optimal up to a factor of $2$ \cite{BollobasIndependence}.
This coincides with the so-called shattering threshold for colorings of random graphs of average degree $\Delta$ \cite{Zdeborova,Achlioptas}.
Surpassing the bound of $\Delta/\log \Delta$ for any pseudorandom graph class is a tantalizing open problem in graph coloring.
A celebrated conjecture of Karp in average-case complexity theory \cite{karp1976probabilistic}, if true, implies that any proof technique that yields an efficient algorithm for pseudorandom graph coloring
cannot beat this bound, and so one expects all current approaches to fail to do so.
For a selection of works pertaining to coloring pseudorandom graphs, we direct the reader to \cite{AKSConjecture, DKPS, anderson2025coloring, anderson2023colouring, bradshaw2025toward, bonamy2022bounding}.

For $k > 2$, a simple \LLL argument shows that $\chi(H) = O\left(\Delta(H)^{1/(k-1)}\right)$ for all $k$-uniform hypergraphs $H$.
A hypergraph variant of Karp's conjecture appeared in recent work of Wang and the second named author of this manuscript \cite{dhawan2024low}.
In particular, the conjecture posits a computational threshold of $\left((k-1)\Delta/\log \Delta\right)^{1/(k-1)}$ on the number of colors
for efficiently
coloring pseudorandom hypergraphs.
While it is not known if any such class of hypergraphs meets this threshold, there are a number of results that match the growth rate.
Frieze and Mubayi \cite{frieze2013coloring} showed that there exists $c \coloneqq c(k) > 0$ such that $\chi(H) \leq c\left(\Delta/\log \Delta\right)^{1/(k-1)}$ for \textit{linear hypergraphs} (a hypergraph is linear if every pair of vertices is contained in at most one edge).
For $k = 3$, Cooper and Mubayi generalized this result to triangle-free hypergraphs \cite{cooper2016coloring} (a triangle in a hypergraph is a set of three pairwise intersecting edges with no common vertex).
Recently, Li and Postle extended this result to all uniformities \cite{li2022chromatic}.
The best-known bound on $\chi(H)$ for any pseudorandom hypergraph class is due to Iliopoulos \cite{iliopoulos2021improved}.
He showed that hypergraphs of girth at least five\footnote{Iliopoulos refers to such hypergraphs as \textit{locally sparse}. This terminology was also used in recent work of Verstraete and Wilson \cite{verstraete2024independent}, while the classical work of Ajtai, Koml\'os, Pintz, Spencer, and Szemer\'edi call them \textit{uncrowded} \cite{ajtai1982extremal}.}
have chromatic number at most $(k-1 + o(1))\left(\Delta/\log \Delta\right)^{1/(k-1)}$.

\subsection{Proof overview}\label{subsection: proof overview}
Our proof follows a random procedure combining the ideas of \cite{bradshaw2024bipartite} and \cite{dhawan2025list}.
We consider a $\Delta$-regular $k$-uniform $k$-partite hypergraph
$H$
with parts $V_1, \dots, V_k$ in which each vertex $v \in V(H)$ has a list $L(v)$ of $q$
colors.
We randomly assign
a color $\phi(u)$ 
to each vertex $u \in V_1 \cup \cdots \cup V_{k-1}$,
and then for each $v \in V_k$, we estimate the probability that some color $c \in L(v)$ can be 
assigned to $v$ without creating a monochromatic edge.
For each vertex $v \in V_k$, we 
aim to show that the probability that 
$\phi$ cannot be extended to $v$ without creating a monochromatic edge in $H$
is sufficiently small, so that we can apply the Lov\'asz Local Lemma and argue that with positive probability, $\phi$ can be extended to every $v \in V_k$.

The novel ingredient in our approach is the probability distribution used to assign a color $\phi(u)$ to each $u \in V_1 \cup \cdots \cup V_{k-1}$.
If we assign each color $\phi(u)$ uniformly at random from $L(u)$,
then the probability that $\phi$ cannot be extended to a given $v \in V_k$ is only sufficiently small when $q \geq \left ( (k-1 + o(1)) \frac{\Delta}{\log \Delta} \right )^{1/(k-1)}$, which is the main result of \cite{dhawan2025list}.
However, we find that when $\phi(u)$ is chosen
according to a certain non-uniform distribution
for each $u \in V_1$ and uniformly for each $u \in V_2 \cup \cdots \cup V_{k-1}$,
the probability that $\phi$ cannot be extended to a given $v\in V_k$ is sufficiently small when 
$q \geq \left ( \frac 45 (k-1 + o(1) ) \frac{\Delta}{\log \Delta} \right )^{1/(k-1)}$.
The idea of using a non-uniform distribution
to create a partial coloring was used by Mohar, Stacho, and the first named author of this manuscript \cite{bradshaw2024bipartite}
for list coloring bipartite graphs (Theorem~\ref{theo: bipartite 4/5}).
We show that for each $v \in V_k$,
with probability $1-\Delta^{-\omega(1)}$,
there exists a color $c \in L(v)$ that
can be assigned to $v$ without creating a monochromatic edge.
Thus, our approach succeeds with positive probability.

We note that it is unclear whether our approach can be optimized by choosing an appropriate non-uniform distribution that assigns colors to vertices in $V_2 \cup \cdots \cup V_{k-1}$ as well.
One obstacle of exploring more intricate approaches with multiple distributions
is that probabilities of certain events become difficult to analyze as our distributions become more involved.

\section{Preliminaries}\label{section: prelim}

In this section, we describe probabilistic tools that will be used to prove Theorem~\ref{theorem: constant inside}.
We start with the symmetric version of the Lov\'asz Local Lemma.

\begin{theorem}[{Lov\'asz Local Lemma; \cite[Corollary~5.1.2]{AlonSpencer}}]\label{theorem: Lovasz Local Lemma}
    Let $A_1$, $A_2$, \ldots, $A_n$ be events in a probability space. Suppose there exists $p_{LLL} \in [0, 1)$ such that for all $1 \leq i \leq n$ we have $\P[A_i] \leq p_{LLL}$. Further suppose that each $A_i$ is mutually independent from all but at most $d_{LLL}$ other events $A_j$ ($j\neq i$) for some $d_{LLL} \in \N$. If $ep_{LLL}(d_{LLL}+1) \leq 1$, then with positive probability none of the events $A_1$, \ldots, $A_n$ occurs.
\end{theorem}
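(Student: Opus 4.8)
The statement is the classical symmetric Lov\'asz Local Lemma, so the plan is to derive it from the general (asymmetric) form, which I establish first by induction. Concretely, I would fix a dependency graph $G$ on vertex set $\{1, \dots, n\}$ — meaning each $A_i$ is mutually independent of the family $\{A_j : j \neq i,\ ij \notin E(G)\}$ — and prove: if there exist reals $x_1, \dots, x_n \in [0, 1)$ with $\P[A_i] \leq x_i \prod_{j : ij \in E(G)} (1 - x_j)$ for every $i$, then $\P\big[\bigcap_{i=1}^n \overline{A_i}\big] \geq \prod_{i=1}^n (1 - x_i) > 0$. The theorem is then the special case in which $G$ witnesses the independence hypothesis with maximum degree at most $d_{LLL}$ and $x_i \defeq \frac{1}{d_{LLL}+1}$ for all $i$: since each product then has at most $d_{LLL}$ factors, all lying in $(0,1]$,
\[
x_i \prod_{j : ij \in E(G)} (1 - x_j) \ \geq\ \frac{1}{d_{LLL}+1}\left(1 - \frac{1}{d_{LLL}+1}\right)^{d_{LLL}} \ =\ \frac{1}{d_{LLL}+1}\left(\frac{d_{LLL}}{d_{LLL}+1}\right)^{d_{LLL}} \ \geq\ \frac{1}{e\,(d_{LLL}+1)} \ \geq\ p_{LLL} \ \geq\ \P[A_i],
\]
using $(1 + 1/d_{LLL})^{d_{LLL}} \leq e$ and then the hypothesis $e\,p_{LLL}(d_{LLL}+1) \leq 1$ (the case $d_{LLL} = 0$ being trivial). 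Hence the general form applies and gives $\P\big[\bigcap_i \overline{A_i}\big] \geq (1 - \tfrac{1}{d_{LLL}+1})^n > 0$.

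The heart of the matter is the following claim, proved by induction on $|S|$: for every index $i$ and every $S \subseteq \{1, \dots, n\} \setminus \{i\}$, one has $\P\big[\bigcap_{j \in S}\overline{A_j}\big] > 0$ and $\P\big[A_i \mid \bigcap_{j \in S}\overline{A_j}\big] \leq x_i$. The base case $S = \emptyset$ is immediate from $\P[A_i] \leq x_i$. For the inductive step, split $S = S_1 \sqcup S_2$ with $S_1 \defeq \{j \in S : ij \in E(G)\}$ and $S_2 \defeq S \setminus S_1$, and put $B \defeq \bigcap_{j \in S_1}\overline{A_j}$, $C \defeq \bigcap_{j \in S_2}\overline{A_j}$. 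Writing $\P[A_i \mid B \cap C] = \P[A_i \cap B \mid C]\, /\, \P[B \mid C]$, I bound the numerator by dropping $B$ and using independence of $A_i$ from $\{A_j : j \in S_2\}$, hence from $C$: $\P[A_i \cap B \mid C] \leq \P[A_i \mid C] = \P[A_i] \leq x_i \prod_{j : ij \in E(G)}(1 - x_j)$. For the denominator, enumerate $S_1 = \{j_1, \dots, j_r\}$ and telescope, applying the induction hypothesis at each index $j_t$ with a conditioning set of size $(t-1) + |S_2| < |S|$:
\[
\P[B \mid C] = \prod_{t=1}^{r} \P\big[\overline{A_{j_t}} \mid \overline{A_{j_1}} \cap \cdots \cap \overline{A_{j_{t-1}}} \cap C\big] \geq \prod_{t=1}^{r}(1 - x_{j_t}).
\]
Since $\prod_{t=1}^r (1 - x_{j_t})$ is a partial product of $\prod_{j : ij \in E(G)}(1 - x_j)$ and all factors lie in $(0,1]$, the ratio is at most $x_i$; the positivity assertion for $S$ follows from the same telescoping together with the conditional bound for smaller sets, so the induction is well founded and no conditioning event is null. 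Granting the claim, the general form follows from the chain rule:
\[
\P\bigg[\bigcap_{i=1}^n \overline{A_i}\bigg] = \prod_{i=1}^{n} \P\bigg[\overline{A_i} \ \Big|\ \bigcap_{j < i}\overline{A_j}\bigg] = \prod_{i=1}^{n}\bigg(1 - \P\bigg[A_i \ \Big|\ \bigcap_{j < i}\overline{A_j}\bigg]\bigg) \geq \prod_{i=1}^{n}(1 - x_i) > 0.
\]

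I expect the only genuinely delicate step to be the inductive claim: one must split $S$ into the $G$-neighborhood of $i$ and its complement, use independence of $A_i$ from the complement for the numerator, and bound $\P[B \mid C]$ from below by telescoping while invoking the induction hypothesis only on strictly smaller conditioning sets — all the while tracking that no conditioning event has probability zero. The remaining ingredients — the chain-rule identity and the elementary estimate $(1 + 1/d)^d \leq e$ used in the symmetric specialization — are routine.
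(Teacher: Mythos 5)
Your proof is correct: it is the standard derivation of the symmetric Lov\'asz Local Lemma from the general (asymmetric) form via the substitution $x_i = \tfrac{1}{d_{LLL}+1}$ and the estimate $(1+1/d)^d \le e$, with the inductive claim handled properly (the split $S = S_1 \sqcup S_2$, independence for the numerator, telescoping for the denominator, and the positivity bookkeeping are all in order). The paper does not prove this statement --- it cites it directly from Alon--Spencer --- and your argument is essentially the proof given there, so nothing further is needed.
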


We will also need the following special case of the FKG inequality, dating back to Harris \cite{Harris} and Kleitman \cite{Kleitman}.
Given a ground set $\Gamma$, a family $\mathcal A$ of subsets of $\Gamma$ is \emph{decreasing} if for each set $A \in \mathcal A$ and subset $A' \subseteq A$, we have $A' \in \mathcal A$ as well.
The original theorem is stated with regards to two decreasing families; however, as the intersection of decreasing families is decreasing, it can be shown that the inequality holds in the following more general form.

\begin{theorem}[{Harris's Inequality/Kleitman's Lemma \cite[Theorem 6.3.2]{AlonSpencer}}]\label{theo:harris}
    Let $\Gamma$ be a finite set, and let $S \subseteq \Gamma$ be a random subset of $\Gamma$ obtained by selecting each $x \in X$ independently with probability $p_x \in [0,1]$. If $\mathcal{A}_1, \ldots, \mathcal{A}_n$ are decreasing families of subsets of $\Gamma$, then 
    \[\P\left[S \in \bigcap_{i \in [n]}\mathcal{A}_i\right] \,\geq\, \prod_{i \in [n]}\P[S \in \mathcal{A}_i].\]
\end{theorem}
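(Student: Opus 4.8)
The plan is to prove the statement by a double induction: first reduce to the case of two decreasing families, then establish that case by induction on $|\Gamma|$. For the reduction, note that an intersection of decreasing families is again decreasing: if $A \in \bigcap_i \mathcal{A}_i$ and $A' \subseteq A$, then $A' \in \mathcal{A}_i$ for every $i$ since each $\mathcal{A}_i$ is decreasing, hence $A' \in \bigcap_i \mathcal{A}_i$. Thus, writing $\mathcal{B} \defeq \bigcap_{i=2}^n \mathcal{A}_i$, it suffices to show $\P[S \in \mathcal{A}_1 \cap \mathcal{B}] \geq \P[S \in \mathcal{A}_1]\,\P[S \in \mathcal{B}]$ for two decreasing families, after which a one-line induction on $n$ finishes the proof.

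For the two-family case, I would identify subsets of $\Gamma$ with their indicator vectors in $\set{0,1}^\Gamma$ and set $f \defeq \bbone_{\mathcal{A}}$, $g \defeq \bbone_{\mathcal{B}}$; both are nonincreasing in each coordinate, and the goal becomes $\E[fg] \geq \E[f]\,\E[g]$, where the coordinate at $x$ equals $1$ with probability $p_x$ independently across $x \in \Gamma$. I induct on $|\Gamma|$, the case $|\Gamma| = 0$ being trivial. For the inductive step, fix $x \in \Gamma$, put $\Gamma' \defeq \Gamma \setminus \set{x}$ and $p \defeq p_x$, and for $\omega \in \set{0,1}^{\Gamma'}$ write $f_0(\omega), f_1(\omega)$ for the values of $f$ when the $x$-coordinate is $0$ or $1$ (and similarly $g_0, g_1$). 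Monotonicity of $f, g$ gives $f_0 \geq f_1$ and $g_0 \geq g_1$ pointwise on $\set{0,1}^{\Gamma'}$.

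The core computation is the pointwise identity
\[
\bigl[(1-p) f_0 g_0 + p f_1 g_1\bigr] - \bigl[(1-p) f_0 + p f_1\bigr]\bigl[(1-p) g_0 + p g_1\bigr] = p(1-p)(f_0 - f_1)(g_0 - g_1) \geq 0,
\]
which says precisely that $f$ and $g$ are nonnegatively correlated conditionally on the coordinates in $\Gamma'$, the only remaining randomness being the single coordinate at $x$. Setting $F \defeq (1-p) f_0 + p f_1$ and $G \defeq (1-p) g_0 + p g_1$, this gives $\E[\,fg \mid S \cap \Gamma' = \omega\,] \geq F(\omega)\,G(\omega)$ for every $\omega$. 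Moreover $F$ and $G$ are nonincreasing on $\set{0,1}^{\Gamma'}$, being nonnegative combinations of the nonincreasing functions $f_0, f_1$ (resp.\ $g_0, g_1$), each of which inherits monotonicity from $f$ (resp.\ $g$). Applying the induction hypothesis to $F, G$ on the smaller ground set $\Gamma'$ yields $\E[FG] \geq \E[F]\,\E[G]$; combining this with the conditional bound and the tower property $\E[f] = \E[F]$, $\E[g] = \E[G]$ gives
\[
\E[fg] = \E\bigl[\,\E[\,fg \mid S \cap \Gamma'\,]\,\bigr] \geq \E[FG] \geq \E[F]\,\E[G] = \E[f]\,\E[g],
\]
completing the induction.

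The only real subtlety — and the step I expect to require the most care to state cleanly — is verifying that the partially averaged functions $F$ and $G$ remain decreasing, so that the induction hypothesis genuinely applies; this is where it matters that we average over a coordinate that has been split off and that decreasing functions form a cone closed under nonnegative combinations. The pointwise inequality itself is elementary, and the passage from $n$ families to two is purely formal once closure of decreasing families under intersection is noted.
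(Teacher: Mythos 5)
Your argument is correct, but it is worth noting that the paper does not prove this statement at all: Theorem~\ref{theo:harris} is quoted as a known result from Alon--Spencer, and the only justification the paper supplies is the remark preceding the statement that the $n$-family version follows from the standard two-family version because an intersection of decreasing families is decreasing. Your reduction from $n$ families to two is exactly that remark, made precise; the rest of your write-up supplies a full proof of the two-family case that the paper simply cites. That proof --- induction on $|\Gamma|$, conditioning on all coordinates but one, the pointwise identity with discrepancy $p(1-p)(f_0-f_1)(g_0-g_1)\ge 0$, and then the tower property --- is the standard textbook proof of Harris's inequality and is sound. You correctly identified the one genuine subtlety: the inductive statement must be phrased for arbitrary nonincreasing real-valued functions on $\set{0,1}^{\Gamma}$ rather than for indicators of decreasing families, since the partially averaged functions $F=(1-p)f_0+pf_1$ and $G=(1-p)g_0+pg_1$ are no longer indicators; you handle this by observing that nonincreasing functions are closed under nonnegative combinations, which is exactly what is needed. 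The net effect is that your proposal is strictly more self-contained than the paper, which treats the two-family inequality as a black box.
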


Finally, we use the following well-known corollary of Jensen's inequality:

\begin{theorem}[{Jensen's Inequality \cite[(3.6.1)]{hardy1952inequalities}}]\label{theo:Jensen}
Let $I\subseteq\mathbb{R}$ be an interval and let $f:I\to\mathbb{R}$ be convex. 
For any $x_1,\dots,x_t\in I$,
\[
\sum_{i=1}^t f(x_i)\;\ge\; t\,f\!\left(\frac{1}{t}\sum_{i=1}^t x_i\right).
\]
\end{theorem}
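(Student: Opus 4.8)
The plan is to prove the inequality by induction on $t$, relying only on the two-point definition of convexity (namely $f(\lambda a + (1-\lambda)b) \le \lambda f(a) + (1-\lambda)f(b)$ for all $a,b \in I$ and $\lambda \in [0,1]$), so that no differentiability hypothesis on $f$ is needed. The base case $t=1$ is a trivial identity. For the inductive step, I would assume the statement for $t-1$ points and fix arbitrary $x_1,\dots,x_t \in I$. Set $\bar x \defeq \frac1t\sum_{i=1}^t x_i$ and $\bar y \defeq \frac{1}{t-1}\sum_{i=1}^{t-1} x_i$; both lie in $I$ because each is a convex combination of points of the interval $I$, which is itself a convex set.

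The key algebraic observation is the decomposition $\bar x = \frac{t-1}{t}\,\bar y + \frac1t\,x_t$, which writes $\bar x$ as a convex combination of the two points $\bar y, x_t \in I$. Applying convexity of $f$ to this combination gives $f(\bar x) \le \frac{t-1}{t}f(\bar y) + \frac1t f(x_t)$, i.e. $t\,f(\bar x) \le (t-1)f(\bar y) + f(x_t)$. The inductive hypothesis applied to $x_1,\dots,x_{t-1}$ yields $(t-1)f(\bar y) \le \sum_{i=1}^{t-1} f(x_i)$, and substituting this into the previous inequality gives $t\,f(\bar x) \le \sum_{i=1}^{t-1} f(x_i) + f(x_t) = \sum_{i=1}^{t} f(x_i)$, which is exactly the claimed bound. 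This closes the induction.

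As an alternative one could avoid induction entirely via a supporting line: if the $x_i$ are not all equal then $\bar x$ lies in the interior of $I$, where convexity furnishes a slope $m$ with $f(x) \ge f(\bar x) + m(x - \bar x)$ for every $x \in I$; summing this over $i$ and using $\sum_{i=1}^t (x_i - \bar x) = 0$ yields $\sum_{i=1}^t f(x_i) \ge t\,f(\bar x)$, while the case of all $x_i$ equal is immediate. I expect no genuine obstacle in either route; the one point deserving an explicit word is that the intermediate averages $\bar x$ and $\bar y$ actually belong to $I$, which is where convexity (equivalently, the interval property) of the domain is used, and — in the supporting-line variant — the mild care needed to locate $\bar x$ in the interior of $I$ so that a supporting line exists.
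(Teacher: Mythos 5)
Your proof is correct. Note, however, that the paper does not prove this statement at all: it is recorded in the preliminaries as a known result and attributed to Hardy, Littlewood, and P\'olya, so there is no in-paper argument to compare against. Your inductive argument is the standard textbook proof of the finite form of Jensen's inequality: the decomposition $\bar x = \frac{t-1}{t}\bar y + \frac{1}{t}x_t$ is exactly the right way to reduce $t$ points to $t-1$, the two-point convexity hypothesis is all that is used, and you correctly flag the only point needing care, namely that $\bar x$ and $\bar y$ lie in $I$ because $I$ is convex. The supporting-line alternative is also sound, and your handling of the degenerate case (all $x_i$ equal, so that $\bar x$ need not be interior) is the right fix; one could add that when the $x_i$ are not all equal, $\bar x$ lies strictly between $\min_i x_i$ and $\max_i x_i$ and hence in the interior of $I$, so a supporting line indeed exists. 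Either route fully establishes the statement as used in the paper.
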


\section{Proof of Theorem~\ref{theorem: constant inside}}

Fix an integer $k \geq 2$ and let $\varepsilon>0$ be arbitrary. Let $\Delta \in \N$ be sufficiently large in terms of $\varepsilon$ and $k$. Without loss of generality, we may assume $\varepsilon < 1$ for the remainder of the proof. Let $H=(V,E)$ be a $k$-uniform $k$-partite hypergraph of maximum degree at most $\Delta$. Without loss of generality, we may assume that $H$ is $\Delta$-regular as, by standard arguments, any $k$-uniform $k$-partite hypergraph with maximum degree at most $\Delta$ can be embedded into a $\Delta$-regular $k$-partite $k$-graph.

We assume that each vertex $v\in V$ is assigned a list $L(v)$ of available colors. 
Without loss of generality, we may assume that all lists have the same size by arbitrarily deleting surplus colors from $L(v)$ if needed, that 
is, for some $q \in \mathbb N$, $|L(v)| = q$ for every $v \in V$.
We represent colors by nonnegative integers and use the usual order on $\mathbb{N}$.
Each list $L(v)$ inherits this order. 
For any ordered list $L \subseteq \N$ and any $c\in L$, define the index
\(I(L,c)=i\) if and only if $i-1$ elements of $L$ appear before $c$, i.e., $c$ is the $i$-th element of $L$ when considering the list in ascending order.

We will begin by randomly generating a partial coloring $\phi$ of $H$ which assigns colors to all vertices in $V_1 \cup \dots \cup V_{k-1}$, that is, the first $k-1$ vertex parts of $H$. We will then demonstrate that this coloring can be extended to the entire hypergraph $H$ with positive probability.

For each $u\in V_1\cup\cdots\cup V_{k-1}$, $\phi(u)$ is assigned the color $c$ chosen independently and at random from $L(u)$ according to the following probability distribution:
\begin{align}\label{eq: prob dist}
    \P_{u,c} \coloneqq \P[\phi(u) = c] = \begin{cases} \frac{\frac{8}{5}}{\,q\left(1 - \frac{3}{5q}\right)} \left(1 - \frac{3}{4q} \times I(L(u), c)\right) & \text{if } u \in V_{1};\\[12pt]
    \frac{1}{q} & \text{if } u\in V_2\cup\cdots \cup V_{k-1}.
\end{cases}
\end{align}
Observe that for each $u \in V_1 \cup \cdots \cup V_{k-1}$, we have $\sum_{c \in L(u)} \mathbb P_{u,c} = 1$. In particular, the above assignment defines a valid probability distribution. For each $e \in E$, define the \emphd{edge status}
\begin{align}\label{eq:  edge status}
\phi_e \coloneqq
\begin{cases}
c & \text{if } \phi(u)=c \text{ for all } u \in e \cap (V_1 \cup \cdots \cup V_{k-1});\\[2pt]
* & \text{otherwise.}
\end{cases}
\end{align}

We say that $e$ is \emphd{problematic with respect to color $c$} if $\phi_e = c$, and we define the probability of this event as
\[
\P_{e}(c) \coloneqq \P[\phi_e = c].
\]
Let 
\begin{align}\label{eq:  p definition}
p \coloneqq \frac{\frac{8}{5}}{\,q^{k-1}\left(1 - \frac{3}{5q}\right)} \left(1- \frac{3}{4q}\right).
\end{align}
Note that for each edge $e$ and color $c$,
\[
0 \le \P_{e}(c) \le p.
\]

The expected number of edges incident to a vertex \( v \in V_k \) that are problematic with color \( c \in L(v)\) is given by
\begin{align}\label{eq: rho}
    \rho(v,c) = \sum_{e \in E(v)} \P_e(c).
\end{align}
We say that \emphd{$c$ is blocked at $v$} if there exists an edge $e \in E(v)$ such that $e$ is problematic with respect to $c$. We also say that $v \in V_k$ is \emphd{blocked} if every color in $L(v)$ is blocked at $v$. 
Define the event
\begin{align}\label{Bad event definiton}
  \mathcal{B}_v \;\coloneqq\; 
  \left\{\,\forall\,c\in L(v) \text{, } \exists\,e\in E(v)\text{ such that } \phi_e = c\right\},
\end{align}
i.e., $\mathcal{B}_v$ denotes the event that $v$ is blocked.
The goal is to show that, with positive probability, none of the events $\{\mathcal{B}_v\}_{v \in V_k}$ occur, which implies that there exists an outcome of $\phi$ with no blocked vertex in $V_k$, and we may extend $\phi$ to all of $V$ by coloring each $v \in V_k$ with some color $c \in L(v)$ that is not blocked at $v$.

Before we prove our main result, we state two key propositions (we defer the proofs of these propositions to \S\ref{sec:proofs}). First, Proposition~\ref{prop: Probability Lemma} shows that if, for a  vertex $v \in V_k$, there exists a \textit{large} subset of colors $L^*\subseteq L(v)$ such that the expected number of problematic edges in $E(v)$ with respect to colors in $L^*$ is \textit{small}, then $\P[\mathcal{B}_v]$ is exponentially small in $\Delta$.

\begin{prop}\label{prop: Probability Lemma}
Let $\gamma, a \in (0, 1)$ be fixed, let $\Delta \in \N$ be sufficiently large in terms of $\gamma$ and $a$, and let
\(
q\coloneqq\left(\frac{(k-1)(a+2\gamma)}{1-p} \frac{\Delta}{\log\Delta}\right)^{\!\frac1{k-1}}
\).
Suppose that $|L(u)| \geq q$ for each $u \in V(H)$.
Additionally, for a vertex $v\in V_k$, suppose that there exists a set $L^*\subseteq L(v)$ of size at least $\gamma q$ such that the average value $\rho(v,c)$ for $c\in L^{*}$ satisfies
\[
\frac{1}{|L^{*}|}\sum_{c\in L^{*}}\rho(v,c)\ \le\ \Delta\left(\frac{a+\gamma}{q^{k-1}}\right).
\]
Then 
$\P[\mathcal{B}_v] \leq \exp\left(-\Delta^{\gamma/(3k)}\right)$.
\end{prop}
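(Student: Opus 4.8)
The plan is to bound $\P[\mathcal{B}_v]$ by exhibiting a large sub-collection of colors in $L^*$ that are each reasonably likely to remain unblocked, and then arguing that enough of them are simultaneously unblocked using the negative-correlation structure provided by Harris's inequality (Theorem~\ref{theo:harris}).

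First I would use the averaging hypothesis to pass to a further subset. Since $\frac{1}{|L^*|}\sum_{c\in L^*}\rho(v,c) \le \Delta(a+\gamma)/q^{k-1}$ and $|L^*| \ge \gamma q$, a Markov-type argument shows that at least half of the colors $c \in L^*$ satisfy $\rho(v,c) \le 2\Delta(a+\gamma)/q^{k-1}$; call this set $L^{**}$, so $|L^{**}| \ge \gamma q / 2$. For each such $c$, I want a lower bound on $\P[c \text{ is not blocked at } v]$. The event ``$c$ is not blocked at $v$'' is the event that $\phi_e \ne c$ for every $e \in E(v)$. Writing $S$ for the random set of vertex–color incidences realized by $\phi$ on $V_1 \cup \cdots \cup V_{k-1}$, each event $\{\phi_e \ne c\}$ is a decreasing event in the natural product space (adding more ``matches'' can only create a problematic edge, not destroy one), so Harris's inequality gives
\[
\P[c \text{ not blocked at } v] \ge \prod_{e \in E(v)} (1 - \P_e(c)) \ge \prod_{e\in E(v)} e^{-\P_e(c)/(1-p)} = \exp\!\left(-\tfrac{\rho(v,c)}{1-p}\right),
\]
using $1 - x \ge e^{-x/(1-p)}$ for $x \in [0,p]$. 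With the choice of $q$ in the statement, $\rho(v,c)/(1-p) \le 2\Delta(a+\gamma)/((1-p)q^{k-1}) = 2(a+\gamma)/(a+2\gamma) \cdot \log\Delta$, which is bounded above by (roughly) $2\log\Delta$; so each color in $L^{**}$ is unblocked with probability at least $\Delta^{-2}$, say — a polynomial, not exponential, lower bound.

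Next, to get that \emph{some} color in $L^{**}$ is unblocked, I would again invoke Harris: the events $\{c \text{ not blocked at } v\}$ over $c \in L^{**}$ are all decreasing in $S$ (being intersections of decreasing events), so
\[
\P[\mathcal{B}_v] \le \P\big[\,\forall c \in L^{**}:\ c \text{ blocked at } v\,\big] \le \prod_{c \in L^{**}} \P[c \text{ blocked at } v] \le \big(1 - \Delta^{-2}\big)^{|L^{**}|} \le \exp\!\left(-\Delta^{-2}\cdot \tfrac{\gamma q}{2}\right).
\]
Finally, since $q = \big((k-1)(a+2\gamma)\Delta/((1-p)\log\Delta)\big)^{1/(k-1)}$ is of order $(\Delta/\log\Delta)^{1/(k-1)}$, the exponent $\Delta^{-2}\gamma q/2$ needs a little care: for $k = 2$ this is already $\Omega(\Delta^{-1}/\log\Delta)$, which does \emph{not} beat $\exp(-\Delta^{\gamma/(3k)})$. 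So the crude bound $\Delta^{-2}$ above is too lossy, and the main obstacle is getting a good enough lower bound on $\P[c\text{ not blocked at }v]$ — one must exploit that the true constant multiplying $\log\Delta$ in the exponent is $2(a+\gamma)/(a+2\gamma) < 2$, indeed bounded away from a value that would be fatal, so that $\P[c \text{ not blocked}] \ge \Delta^{-2(a+\gamma)/(a+2\gamma)}$, and $2(a+\gamma)/(a+2\gamma) < 1$ precisely when $\gamma > a$... which need not hold. This suggests the argument must instead be run on the \emph{whole} set $L^*$ (of size $\gamma q$) while tracking the averaged exponent via Jensen's inequality (Theorem~\ref{theo:Jensen}) applied to the convex function $x \mapsto e^{-x}$: then $\sum_{c\in L^*}\P[c\text{ not blocked}] \ge |L^*|\exp(-\frac{1}{|L^*|(1-p)}\sum_{c\in L^*}\rho(v,c)) \ge \gamma q \cdot \Delta^{-(a+\gamma)/(a+2\gamma)}$, and then a second, more careful application of Harris (or a direct second-moment/concentration step) converts this \emph{expected} count of unblocked colors into the conclusion that with probability at least $1 - \exp(-\Delta^{\gamma/(3k)})$ at least one color is unblocked. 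Pinning down that final conversion — ensuring $\gamma q \cdot \Delta^{-(a+\gamma)/(a+2\gamma)}$ is a positive power of $\Delta$, which holds because $q \sim \Delta^{1/(k-1)}$ dominates and $(a+\gamma)/(a+2\gamma) < 1$ — and quantifying the deviation bound is where I expect the real work to lie.
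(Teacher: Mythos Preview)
Your overall architecture is the same as the paper's: bound $\P[c\text{ not blocked at }v]$ from below via Harris (exactly as the paper does in its Claim), apply Jensen to $e^{-x}$ over $L^*$ to get $\sum_{c\in L^*}\P[c\text{ not blocked}]\ge \gamma q\cdot \Delta^{-\frac{a+\gamma}{(k-1)(a+2\gamma)}}$ (you dropped the factor $k-1$ in the denominator of the exponent, but that is minor), and then check this is a positive power of $\Delta$. Where your proposal diverges from the paper, and where the real gap lies, is the step that converts this into $\P[\mathcal B_v]\le\prod_{c}\P[c\text{ blocked}]$.

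You invoke Harris for this step, but that is doubly wrong. First, the random set $S$ of vertex--color incidences is \emph{not} a product measure: for each $u$, exactly one pair $(u,c)$ is present, so the hypothesis of Theorem~\ref{theo:harris} fails across colors. Second, even if it did apply, Harris says decreasing events are \emph{positively} correlated, so it would give $\P\bigl[\bigcap_c\{c\text{ blocked}\}\bigr]\ge\prod_c\P[c\text{ blocked}]$, the reverse of what you wrote. The paper handles this step not with Harris and not with second moment, but by citing a separate negative-correlation lemma (from \cite{dhawan2025list}): the events $\{X_c=1\}_{c\in L(v)}$ are negatively correlated because a vertex receiving color $c$ precludes it from receiving any other color, so an edge being monochromatic in $c$ only makes it \emph{less} likely that another edge is monochromatic in $c'$. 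This yields $\P[\mathcal B_v]\le\prod_{c\in L(v)}\P[X_c=1]$ directly. From there one applies $1-x\le e^{-x}$ and then Jensen, exactly as you sketch. So your ``second, more careful application of Harris'' cannot be Harris at all; the missing ingredient is this negative-correlation statement, and a second-moment approach would run into the same correlation issue you have not resolved.
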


Next, Proposition~\ref{prop: Existence Lemma} confirms the existence of a subset $L^{*}\subseteq L(v)$ for each $v \in V_k$ satisfying the conditions of Proposition~\ref{prop: Probability Lemma} for $a = 4/5$.

\begin{prop}\label{prop: Existence Lemma}
Let $a = \frac45$, and let $\gamma,\Delta,q$ be as in Proposition~\ref{prop: Probability Lemma}.
Suppose that $|L(u)| \geq q$ for each $u \in V(H)$.
For each vertex $v \in V_k$, there exists a subset $L^{*} \subseteq L(v)$ with $|L^{*}| \ge \gamma q$ such that
\[
  \frac{1}{|L^{*}|}\sum_{c \in L^{*}} \rho(v,c)
  \;\le\; 
  \Delta\left(\frac{a + \gamma}{q^{\,k-1}}\right).
\]
\end{prop}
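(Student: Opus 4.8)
The plan is to reduce the claim to a purely combinatorial statement about the overlaps between $L(v)$ and the lists of $v$'s neighbours in $V_1$, and then to exploit that every list is an ordered subset of $\mathbb N$, following the strategy of \cite{bradshaw2024bipartite} for the bipartite case. For $e\in E(v)$ let $u_e$ be the vertex of $e$ in $V_1$. Expanding $\P_e(c)=\prod_{u\in e\cap(V_1\cup\cdots\cup V_{k-1})}\P_{u,c}$, bounding each of the $k-2$ factors coming from $V_2\cup\cdots\cup V_{k-1}$ by $1/q$, and substituting \eqref{eq: prob dist} gives
\[
\rho(v,c)\ \le\ \frac{8/5}{q^{k-1}\bigl(1-\tfrac{3}{5q}\bigr)}\Bigl(d(c)-\tfrac{3}{4q}\,\Pi(c)\Bigr),\qquad
d(c)\defeq\bigl|\{e\in E(v):c\in L(u_e)\}\bigr|,\quad
\Pi(c)\defeq\!\!\sum_{e\,:\,c\in L(u_e)}\!\! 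I(L(u_e),c).
\]
Since $\tfrac{8/5}{q^{k-1}(1-3/(5q))}\cdot\tfrac58\bigl(1-\tfrac3{5q}\bigr)=q^{-(k-1)}$, it suffices to find $L^*\subseteq L(v)$ with $|L^*|\ge\gamma q$ and $\tfrac1{|L^*|}\sum_{c\in L^*}\bigl(d(c)-\tfrac3{4q}\Pi(c)\bigr)\le\tfrac58\bigl(1-\tfrac3{5q}\bigr)(a+\gamma)\Delta$ for $a=\tfrac45$. Writing $m_e\defeq|L(v)\cap L(u_e)|$ and $h(m)\defeq m-\tfrac{3m(m+1)}{8q}$, and using that the colours of $L(v)\cap L(u_e)$ occupy $m_e$ distinct positions in $L(u_e)$, so $\sum_{c\in L(v)\cap L(u_e)}I(L(u_e),c)\ge\binom{m_e+1}{2}$, one gets $\sum_{c\in L(v)}\bigl(d(c)-\tfrac3{4q}\Pi(c)\bigr)\le\sum_{e}h(m_e)$; as $h$ is concave and increasing on $[0,q]$ with $h(q)=\tfrac{5q-3}{8}$ (here Jensen enters), the average of $d(c)-\tfrac3{4q}\Pi(c)$ over \emph{all} of $L(v)$ is already at most $\tfrac58\bigl(1-\tfrac3{5q}\bigr)\Delta$. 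Thus the proposition asks for a $\gamma$-fraction of colours on which this is beaten by the factor $a=\tfrac45$ — the gain afforded by the non-uniform weighting on $V_1$ (with the uniform weighting one only gets $a=1$, i.e.\ the bound of \cite{dhawan2025list}, and then the existence lemma is trivial).

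No fixed rule for $L^*$ depending only on $L(v)$ can succeed, since an adversary can front-load any prescribed colours near the tops of all lists $L(u_e)$; so we take $L^*$ to consist of $\lceil\gamma q\rceil$ colours of smallest $\rho(v,\cdot)$ and bound $\sum_{c\in L^*}\bigl(d(c)-\tfrac3{4q}\Pi(c)\bigr)$ by comparing with a well-chosen competitor. Order $L(v)=\{c_1<\cdots<c_q\}$ and set $L_{\mathrm{hi}}=\{c_{q-\lceil\gamma q\rceil+1},\dots,c_q\}$, $L_{\mathrm{lo}}=L(v)\setminus L_{\mathrm{hi}}$. Because every colour of $L_{\mathrm{lo}}$ precedes every colour of $L_{\mathrm{hi}}$ in $\mathbb N$, for $c\in L_{\mathrm{hi}}\cap L(u_e)$ we have $I(L(u_e),c)\ge|L(u_e)\cap L_{\mathrm{lo}}|+(\text{position of }c\text{ within }L_{\mathrm{hi}}\cap L(u_e))$; summing over $c\in L_{\mathrm{hi}}$ and then over $e$ yields a per-edge bound of the shape $a_e\bigl(1-\tfrac{3(2m_e-a_e+1)}{8q}\bigr)$ with $a_e\defeq|L_{\mathrm{hi}}\cap L(u_e)|\le\min\{m_e,\lceil\gamma q\rceil\}$. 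A short computation shows this is at most $\gamma q\bigl(\tfrac12+\tfrac{3\gamma}{8}\bigr)$ whenever $m_e\ge\tfrac{2q}{3}$, and is always at most $\lceil\gamma q\rceil$.

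To finish, one would like a dichotomy on the overlaps $m_e$: if only few edges have $m_e<\tfrac{2q}{3}$, the competitor $L_{\mathrm{hi}}$ already works (the $\Theta(\Delta)$ edges with $m_e\ge\tfrac{2q}{3}$ contribute at most $\gamma q(\tfrac12+\tfrac{3\gamma}8)$ each, the few remaining edges add a lower-order term, and the total matches $\tfrac58\bigl(1-\tfrac3{5q}\bigr)(a+\gamma)\Delta\cdot|L^*|$ with $a=\tfrac45$); while if essentially every edge has $m_e<\tfrac{2q}{3}$, then $\sum_e h(m_e)\le\Delta\,h(\tfrac{2q}{3})<\tfrac{q\Delta}{2}$, so even the global average of $d(c)-\tfrac3{4q}\Pi(c)$ lies below the target and any $\gamma q$ colours of smallest $\rho$ suffice. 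The genuinely delicate point — and the main obstacle — is the intermediate regime, where a bounded-below constant fraction of edges have $m_e$ of order $\tfrac{2q}{3}$: there the $L_{\mathrm{hi}}$-comparison overcounts those edges, yet the crude bound $\sum_e h(m_e)\le\Delta h(q)$ is still above the target. One must interpolate between the two estimates by introducing a threshold $t\in[0,q]$, using $L_{\mathrm{hi}}$ on the edges with $m_e\ge t$ and the bound through $\sum_e h(m_e)$ on the edges with $m_e<t$ (also exploiting that the latter edges leave many colours of $L(v)$ in few neighbour lists, hence of small $\rho$), and optimising over $t$. This is the analogue of the threshold function of \cite{bradshaw2024bipartite}, and it is exactly where the constants $\tfrac85,\tfrac35,\tfrac34$ in \eqref{eq: prob dist} are calibrated so that the optimum lands at $a=\tfrac45$; as noted after Theorem~\ref{theorem: constant inside}, a sharper optimisation would lower $\tfrac45$ marginally at the cost of a messier threshold analysis, which we do not pursue.
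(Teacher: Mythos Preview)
Your setup is sound and you have all the right ingredients: the reduction to bounding $d(c)-\tfrac{3}{4q}\Pi(c)$, the two candidate sets $L(v)$ and $L_{\mathrm{hi}}$, the concave function $h$, and the per-edge estimate for $L_{\mathrm{hi}}$. But the proof is left incomplete exactly at the point you call ``the main obstacle'': you assert that the intermediate regime requires a threshold $t$ and an optimisation in the style of \cite{bradshaw2024bipartite}, and then stop. In fact this obstacle is self-inflicted, caused by dichotomising on the \emph{number} of edges with $m_e<\tfrac{2q}{3}$ rather than on the \emph{average} $\bar m\defeq\tfrac{1}{\Delta}\sum_{e\in E(v)} m_e$.

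The paper's proof makes the clean split on the average (it uses $z=\tfrac1\Delta\sum_e |L_e|$ with $L_e=\bigcap_{u\in e}L(u)$, but your $\bar m$ works identically). If $\bar m\le\tfrac{2q}{3}$, you already noted that Jensen applied to the concave $h$ gives $\sum_e h(m_e)\le\Delta\,h(\bar m)$; now use $h$ increasing on $[0,q]$ to bound this by $\Delta\,h(\tfrac{2q}{3})<\tfrac{q\Delta}{2}$, so the average over $L^*=L(v)$ already meets the target with $a=\tfrac45$. If $\bar m>\tfrac{2q}{3}$, your own per-edge bound for $L_{\mathrm{hi}}$ is increasing in $a_e$ on $[0,\gamma q]$, hence is at most $\gamma q\bigl(1-\tfrac{3(2m_e-\gamma q+1)}{8q}\bigr)$ for every edge regardless of $a_e$; summing over $e$ and dividing by $|L_{\mathrm{hi}}|$ yields an expression depending only on $\bar m$, namely $\Delta-\tfrac{3}{8q}(2\Delta\bar m-\gamma q\Delta)$, which is below $(\tfrac12+\tfrac{3\gamma}{8})\Delta$ once $\bar m>\tfrac{2q}{3}$. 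Either way you hit the target; there is no intermediate regime and no threshold to optimise. (Equivalently: instead of bounding $I(L(u_e),c)$ via $|L_{\mathrm{lo}}\cap L(u_e)|$ plus the position within $L_{\mathrm{hi}}\cap L(u_e)$, use the single inequality $I(L(u_e),c)\ge m_e-(q-i_{v,c})$, valid for all $c\in L(v)$ with the convention that out-of-list positions are $\tfrac{4q}{3}$; summing over $c\in L_{\mathrm{hi}}$ then produces the linear dependence on $\bar m$ directly.) The threshold function of \cite{bradshaw2024bipartite} is invoked there to push the constant \emph{below} $\tfrac45$, which is explicitly not the goal of this proposition.
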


With the above propositions in hand, we prove our main result (Theorem~\ref{theorem: constant inside}).

\begin{proof}[Proof of Theorem~\ref{theorem: constant inside}]
    Assign each vertex $v \in V(H)$ a list $L(v)$ of $q = \left(\frac{4}{5} (k - 1 + \eps) \frac{\Delta}{\log\Delta}\right)^{\frac{1}{k - 1}}$ colors represented as nonnegative integers. 
    Sample a partial $L$-coloring $\phi$ of $H$ which colors vertices in parts \(V_1,\dots,V_{k-1}\) according to the distribution in \eqref{eq: prob dist}.
    Note the following for $a = 4/5$, $\gamma = \eps/(4k)$, and $p$ as defined in \eqref{eq:  p definition}:
    \[
    \left(\frac{4}{5}(k - 1 + \eps)\frac{\Delta}{\log\Delta}\right)^{\frac{1}{k - 1}} \geq \left(\frac{(k - 1)(a + 2\gamma)}{1-p}\frac{\Delta}{\log\Delta}\right)^{\frac{1}{k - 1}},\]
    for $\Delta$ sufficiently large.
    In particular, we may apply Propositions~\ref{prop: Probability Lemma} and \ref{prop: Existence Lemma} with $a=4/5$, $\gamma = \eps/(4k)$, and $p$ as defined in \eqref{eq:  p definition}.
    It follows that 
        \begin{align*}
        \P[\mathcal{B}_v]\leq \exp\left(-\Delta^{\gamma/(3k)}\right) = \exp\left(-\Delta^{\epsilon/(12k^2)}\right) \qquad \text{for each } v \in V_k,
        \end{align*}
    where $\mathcal{B}_v$ is as defined in \eqref{Bad event definiton}.

    Note that the event \(\mathcal{B}_w\) is determined by the random colors of the vertices in \(N(w)\), and hence \(\mathcal{B}_w\) is
    mutually independent of all \(\mathcal{B}_{w'}\) with \(N(w)\cap N(w')=\emptyset\).
    Moreover,
    \[
    \max_{w\in V_k}\left|\{\,w' \in V_k\,:\, w'\neq w,\, N(w)\cap N(w')\neq\emptyset\,\}\right|
    \;<\; \max_{w \in V_k}|N(w)|\cdot \Delta \;\le\; (k-1)\Delta^{2},
    \]
    and so we may apply the \hyperref[theorem: Lovasz Local Lemma]{Lovasz Local Lemma} (Theorem~\ref{theorem: Lovasz Local Lemma}) with
    \(p_{LLL}\coloneqq \exp\left(-\Delta^{\epsilon/(12k^2)}\right)\), \(d_{LLL}\coloneqq(k-1)\Delta^2-1\), and \(\{A_1, \dots A_n\} = \{\mathcal{B}_w : w \in V_k\}\), yielding
    \[
    e\,p_{LLL}\,(d_{LLL}+1)
    = e\,\exp\!\left(-\Delta^{\epsilon/(12k^2)}\right)(k-1)\Delta^2 < 1,
    \]
    for \(\Delta\) sufficiently large. Thus, with positive probability, none of the events
    \(\{\mathcal{B}_w : w \in V_k\}\) occur, completing the proof.
\end{proof}

\section{Proofs of Propositions}\label{sec:proofs}
In this section, we include the deferred proofs of the propositions used in the proof of Theorem~\ref{theorem: constant inside}. Specifically, we prove Propositions~\ref{prop: Probability Lemma} and~\ref{prop: Existence Lemma}, which together imply that for every $v \in V_k$, the probability that $v$ is blocked under the random coloring procedure of \eqref{eq: prob dist} is small. We further split this section into two subsections, devoted to the proofs of each of the propositions.

\subsection{Probability bound}
In this section, we will prove Proposition~\ref{prop: Probability Lemma}, which states that if, for a vertex $v \in V_k$,  there exists $L^* \subseteq L(v)$ such that the expected number of edges that are problematic with respect to colors in $ L^{*} $ is sufficiently small, then the probability that $v$ is blocked is also small. For the reader's convenience, we restate the proposition here.

\begin{prop*}[Restatement of Proposition \ref{prop: Probability Lemma}]
Let $\gamma, a \in (0, 1)$ be fixed, let $\Delta \in \N$ be sufficiently large in terms of $\gamma$ and $a$, and let
\(
q\coloneqq\left(\frac{(k-1)(a+2\gamma)}{1-p} \frac{\Delta}{\log\Delta}\right)^{\!\frac1{k-1}}
\).
Suppose that $|L(u)| \geq q$ for each $u \in V(H)$.
Additionally, for a vertex $v\in V_k$, suppose that there exists a set $L^*\subseteq L(v)$ of size at least $\gamma q$ such that the average value $\rho(v,c)$ for $c\in L^{*}$ satisfies
\[
\frac{1}{|L^{*}|}\sum_{c\in L^{*}}\rho(v,c)\ \le\ \Delta\left(\frac{a+\gamma}{q^{k-1}}\right).
\]
Then 
$\P[\mathcal{B}_v] \leq \exp\left(-\Delta^{\gamma/(3k)}\right)$.
\end{prop*}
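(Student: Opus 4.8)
The plan is to bound $\P[\mathcal{B}_v]$ by a union-bound-free argument: we want to show that it is very unlikely that \emph{every} color in $L^*$ is blocked at $v$ simultaneously. The key difficulty is that the events ``$c$ is blocked at $v$'' for different $c \in L^*$ are positively correlated (coloring a neighbor of $v$ can block several colors at once), so a naive independence argument fails. The remedy, following the approach of \cite{bradshaw2024bipartite} and \cite{dhawan2025list}, is to use Harris's inequality (Theorem~\ref{theo:harris}) in the ``wrong'' direction by complementing. First I would reveal the colors $\phi(u)$ for $u \in N(v) \cap (V_1\cup\cdots\cup V_{k-1})$, and for each such $u$ and each potential color $c$, record whether the partial coloring makes some edge $e \in E(v)$ problematic with respect to $c$. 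For a fixed $c$, the event ``$c$ is \emph{not} blocked at $v$'' is a decreasing event in the natural ground set encoding which (neighbor, color) assignments occurred — adding more ``active'' assignments can only create more problematic edges. Hence by Harris's inequality,
\[
\P\bigl[\text{no } c \in L^* \text{ is blocked}\bigr] \;\ge\; \prod_{c \in L^*} \P[c \text{ not blocked at } v],
\]
but this is the wrong direction for us; instead we should apply Harris to the family of decreasing events ``$c$ is not blocked'' in a way that lower-bounds $\P[\bigcap_c \{c \text{ not blocked}\}]$ — wait, that \emph{is} what we want, since $\mathcal{B}_v$ is the complement of that intersection. So the chain is: $\P[\mathcal{B}_v] = 1 - \P[\bigcap_{c\in L^*}\{c\text{ not blocked}\}] \le 1 - \prod_{c\in L^*}\P[c\text{ not blocked at }v] = 1 - \prod_{c\in L^*}(1-\P[c\text{ blocked at }v])$.

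Next I would bound $\P[c \text{ blocked at } v]$ from above for each $c \in L^*$. Since $c$ is blocked iff some $e \in E(v)$ is problematic with respect to $c$, and the events $\{\phi_e = c\}$ over $e \in E(v)$ are \emph{negatively correlated} (they involve disjoint sets of neighbors, as $H$ is linear-like in the sense that distinct edges through $v$ share only $v$; more precisely the relevant vertex sets $e \cap (V_1\cup\cdots\cup V_{k-1})$ for $e \in E(v)$ are pairwise disjoint), we get $\P[c\text{ not blocked at }v] = \prod_{e\in E(v)}(1-\P_e(c)) \ge 1 - \sum_{e}\P_e(c) = 1-\rho(v,c)$, and also, using $1-x \ge e^{-x/(1-x)}$ or simply $\prod(1-x_e)\ge \exp(-\sum x_e/(1-p))$ since each $\P_e(c)\le p$, we obtain $\P[c \text{ not blocked at }v]\ge \exp\bigl(-\rho(v,c)/(1-p)\bigr)$. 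Therefore
\[
\prod_{c\in L^*}\P[c\text{ not blocked at }v] \;\ge\; \exp\!\left(-\frac{1}{1-p}\sum_{c\in L^*}\rho(v,c)\right) \;\ge\; \exp\!\left(-\frac{|L^*|}{1-p}\cdot\Delta\cdot\frac{a+\gamma}{q^{k-1}}\right),
\]
using the hypothesis on the average of $\rho(v,c)$ over $L^*$.

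Finally I would plug in the value of $q$ and $|L^*|\ge \gamma q$ to turn this into the claimed bound. With $q^{k-1} = \frac{(k-1)(a+2\gamma)}{1-p}\cdot\frac{\Delta}{\log\Delta}$, the exponent becomes
\[
\frac{|L^*|}{1-p}\cdot\frac{(a+\gamma)\Delta}{q^{k-1}} \;=\; |L^*|\cdot\frac{a+\gamma}{(k-1)(a+2\gamma)}\cdot\log\Delta \;\le\; |L^*|\cdot\frac{a+\gamma}{(k-1)(a+2\gamma)}\cdot\log\Delta,
\]
and since $\frac{a+\gamma}{a+2\gamma} < 1$ with a gap bounded away from $1$ (roughly $1 - \frac{\gamma}{a+2\gamma}$), this product is at most $(1-\delta)|L^*|\log\Delta$ for some fixed $\delta = \delta(\gamma,a,k)>0$; hence $\prod_{c\in L^*}\P[c\text{ not blocked}] \ge \Delta^{-(1-\delta)|L^*|/(k-1)} \ge \Delta^{-(1-\delta)q}$, which is still tiny, so I need to be more careful: the point is that $1-\prod_{c}(1-x_c)$ is small only when $\prod_c(1-x_c)$ is close to $1$, i.e.\ when $\sum_c x_c$ is small, which is \emph{not} the case here. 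So the Harris step as I set it up gives the wrong conclusion, and the actual argument must instead bound $\P[\mathcal{B}_v]$ directly: one shows $\P[\mathcal{B}_v] \le \prod_{c\in L^*}\P[c\text{ blocked at }v]$ would require \emph{negative} correlation of the blocking events, which is false. The correct route — and the genuine main obstacle — is to run a more refined second-moment or entropy-compression / iterative-revealing argument, or to use the FKG inequality on the decreasing events ``$c$ not blocked'' together with a concentration estimate showing that for most $c\in L^*$ the quantity $\rho(v,c)$ is small, then reveal neighbors in rounds and track the number of surviving colors, showing it stays $\ge 1$ except with probability $\exp(-\Delta^{\gamma/(3k)})$. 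I would follow \cite{dhawan2025list} and \cite{bradshaw2024bipartite}: partition the neighbors of $v$ into $\Delta^{\gamma/(2k)}$ (say) blocks, reveal block by block, and at each step use the average-$\rho$ hypothesis plus Harris to show a constant fraction of the currently-surviving colors in $L^*$ remain unblocked with probability $1-o(1)$, so that after all rounds at least $\gamma q/\Delta^{o(1)} \ge 1$ colors survive; a union bound over the (few) rounds of the failure probability, each bounded via a Chernoff-type estimate on a sum of independent indicators, yields the final $\exp(-\Delta^{\gamma/(3k)})$. The hard part is setting up the round structure so that the surviving-color count has enough independence to apply concentration while the per-round blocking probabilities are controlled by the single averaged hypothesis on $\rho(v,\cdot)$.
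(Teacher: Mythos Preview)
Your proposal contains a genuine gap: you explicitly discard the one step that actually works. You write that ``$\P[\mathcal{B}_v] \le \prod_{c\in L^*}\P[c\text{ blocked at }v]$ would require \emph{negative} correlation of the blocking events, which is false.'' In fact the events $\{X_c=1\}_{c\in L(v)}$ \emph{are} negatively correlated, and this is precisely the hinge of the paper's argument (it is \cite[Claim~4.1.2]{dhawan2025list}, invoked verbatim). Your earlier intuition that ``coloring a neighbor of $v$ can block several colors at once'' is mistaken: each $u\in N(v)$ receives a single color $\phi(u)$, so conditioning on $\phi(u)=c$ forces $\phi(u)\neq c'$ for every other $c'$, which can only \emph{decrease} the chance that some edge through $u$ becomes problematic for $c'$. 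No linearity assumption on $H$ is needed for this.

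Once you accept negative correlation, the proof is short and does not require any round-by-round revealing. Your own estimate $\P[c\text{ not blocked at }v]\ge \exp(-\rho(v,c)/(1-p))$ is correct (the paper obtains it via Harris applied, for each fixed $c$, to the independent indicators $\mathbf 1\{\phi(u)=c\}$; your proposed ground set of all (neighbor,\,color) pairs is not a product space, so Harris would not apply there). Combining with negative correlation and $1-x\le e^{-x}$ gives
\[
\P[\mathcal{B}_v]\;\le\;\prod_{c\in L^*}\bigl(1-e^{-\rho(v,c)/(1-p)}\bigr)\;\le\;\exp\!\Bigl(-\sum_{c\in L^*}e^{-\rho(v,c)/(1-p)}\Bigr).
\]
Now Jensen (convexity of $x\mapsto e^{-x}$) turns the hypothesis on the \emph{average} of $\rho(v,c)$ over $L^*$ into a lower bound on this sum: it is at least $|L^*|\exp\!\bigl(-\tfrac{(a+\gamma)\Delta}{(1-p)q^{k-1}}\bigr)=\gamma q\,\Delta^{-\frac{a+\gamma}{(k-1)(a+2\gamma)}}$, which after substituting $q$ is $\Delta^{\Theta(\gamma/k)}$. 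This is the step you were missing when you concluded your first approach was ``useless'': you bounded $\sum_c \rho(v,c)$ instead of $\sum_c e^{-\rho(v,c)/(1-p)}$, and the latter is exactly where the double exponential and the gap $\frac{a+\gamma}{a+2\gamma}<1$ pay off. The elaborate block-revealing scheme you sketch at the end is unnecessary.
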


\begin{proof}
We begin by showing that
\begin{equation}\label{eq:inner}
   \P[\mathcal{B}_v]
   \;\le\;
   \exp\left(
       -\sum_{c\in L(v)}
          \exp \left(-\frac{\rho(v,c)}{1-p}\right)
   \right).
\end{equation}
For each color $c\in L(v)$, define the indicator variable 
\[
X_c \coloneqq \mathbf{1}\{\exists\, e \in E(v) \,:\, \phi_{e}=c\},
\]
i.e., $X_c=1$  if and only if $c$ is blocked at $v$, and $X_c=0$ otherwise. Set
\[
X \coloneqq \prod_{c\in L(v)} X_c,
\]
so that $X=1$ precisely when every color in $L(v)$ is blocked at $v$, i.e., the event $\mathcal{B}_v$ occurs; thus, it suffices to provide an upper bound for \(\P[X=1]\). 
To this end, we provide an upper bound for $\P[X_c = 1]$.
\begin{claim}\label{claim4.1.1}
    \(\P[X_c=1]\leq 1-\prod_{e \in E(v)}(1-\P_e(c))\)
\end{claim}
\begin{claimproof}
    We show this by providing a lower bound for the value of $\P[X_c=0]$ using \hyperref[theo:harris]{Harris's Inequality} (Theorem \ref{theo:harris}). To this end, we let $X_e$ be the indicator for an edge $e \in E(v)$ being problematic with respect to $c$, i.e.,
    \[X_e \;=\; \mathbf 1_{\{\phi_e=c\}}.\]
    Note that \(X_c=0 \) if and only if \(X_e=0\) for all \(e \in E(v)\).
    Let \[\Gamma_c\coloneqq \{u \in V(H)\setminus V_k:c \in L(u)\}\] be the set of vertices in $V(H) \setminus V_k$ which can be colored with the color $c$, and let $S\subseteq \Gamma_c$ be the random subset such that $u \in S$ if and only if $\phi(u)=c$. For each $u \in \Gamma_c$, $u$ is included in $S$ independently with probability $\P_{u,c}$. We also note that for each $e \in E(v)$, $X_e=1$ if and only if $u \in S$ for each $u \in e\setminus \{v\}$. For each $e \in E(v)$, we define a family $\mathcal A_e$ of subsets of $\Gamma_c$ as follows:
    \[\mathcal{A}_e\coloneqq\{S'\subseteq \Gamma_c\,:\,X_e=0 \text{ when } S = S'\}.\]
    Equivalently, $\mathcal{A}_e$ consists of $S'\subseteq \Gamma_c$ such that there exists $u\in e\setminus \{v\}$ that is not in $S'$. Since $S_1\in \mathcal{A}_e$ implies that there is some vertex $u \in e\setminus \{v\}$ which is not in $S_1$, every subset $S_2 \subseteq S_1$ must also satisfy $u \notin S_2$ and so $S_2 \in \mathcal{A}_e$ as well. In particular, $\mathcal{A}_e$ is a decreasing family of subsets of $\Gamma_c$. Hence, by \hyperref[theo:harris]{Harris's Inequality} (Theorem \ref{theo:harris}), we have
    \[
      \P[X_c = 0]
      \;=\;
      \P\left[S \in\bigcap_{e \in E(v)  } \mathcal{A}_e\right]
      \;\ge\;
      \prod_{e \in E(v) }\P[S \in\mathcal{A}_e]
      \;=\;
      \prod_{e \in E(v) }\left(1-\P_e(c)\right).
    \] It follows that \[
      \P[X_c = 1]
      \;=\;
      1 - \P[X_c = 0]
      \;\le\;
      1 - \prod_{e \in E(v)}\left(1-\P_e(c)\right),
    \]
as desired.
\end{claimproof}

Applying the inequality  $1-x\geq \exp(\frac{-x}{1-x})>\exp(\frac{-x}{1-p})$ for $x<p$ along with Claim \ref{claim4.1.1}, we have
\begin{align}\label{eq:inner0}
    \P[ X_c=1 ]<1-\exp\left(\frac{-\sum_{e \in E(v)} \P_{e}(c)}{1-p}\right)=1-\exp\left(-\frac{\rho(v,c)}{1-p}\right),
\end{align}
where we use the definition of $\rho(v, c)$ from \eqref{eq: rho}.

Next, we claim that the events $\{X_c=1\}_{c \in L(v)}$ are negatively correlated. The argument is identical to \cite[Claim 4.1.2]{dhawan2025list} and so we omit the details.
Intuitively, conditioning on $X_c=1$ implies that there exists an edge $e \in E(v)$ that is problematic with respect to color $c$. The presence of such an edge can only reduce the probability that there exists another edge $f \in E(v)$ that is problematic with respect to some other color $c'$. Consequently, $\{X_c=1\}$ and $\{X_{c'}=1\}$ are negatively correlated. The argument of \cite[Claim 4.1.2]{dhawan2025list} carefully applies this observation to prove the desired result. We note that the argument relies solely on the fact that colors are chosen independently for each vertex in $V \setminus V_k$, i.e., the probability distribution is unimportant and so the argument applies in our setting as well.

With this in hand, we have
\[
  \P \left[\forall c\in L(v),\;X_c=1\right]
  \;\le\;
  \prod_{c\in L(v)}\P[X_c=1].
\]
Combining the above inequality with \eqref{eq:inner0}, we obtain
\[
 \P[\mathcal{B}_v]
  \;\le\;
  \prod_{c\in L(v)}
  \left(1-\exp\!\left(-\frac{\rho(v,c)}{1-p}\right)\right).
\]
Using \(1-x\le e^{-x}\), we have
\[
  \prod_{c\in L(v)}
  \left(1-\exp\left({-\frac{\rho(v,c)}{1-p}}\right)\right)
  \;\le\;
  \exp\left(-\sum_{c\in L(v)}\exp\left({-\frac{\rho(v,c)}{1-p}}\right)\right),
\]
which completes the proof of \eqref{eq:inner}.
From the statement of Proposition \ref{prop: Probability Lemma}, since $L^* \subseteq L(v)$, we conclude that
\begin{equation}\label{eq:outer}
  \P[\mathcal{B}_v]
  \;\le\;
  \exp\left(-\sum_{c\in L^{\!*}}\exp\left({-\frac{\rho(v,c)}{1-p}}\right)\right).
\end{equation}
Since \(f(x)=e^{-x}\) is convex, we apply \hyperref[theo:Jensen]{Jensen's inequality} (Theorem \ref{theo:Jensen}) to obtain
\[\sum_{c\in L^{\!*}}f\left(\frac{\rho(v,c)}{1-p}\right) 
\geq |L^*|f\left(\frac1{|L^*|}\sum_{c\in L^{\!*}}\frac{\rho(v,c)}{1-p}\right) 
\geq \gamma q f\left(\frac1{|L^*|} \sum_{c\in L^{\!*}}\frac{\rho(v,c)}{1-p}\right).\]
With the hypothesis \(\frac1{|L^*|}\sum_{c\in L^{\!*}}\rho(v,c)\le\Delta\frac{a + \gamma}{q^{k-1}}\), we deduce that
\[
  \sum_{c\in L^{\!*}}\exp \left({-\frac{\rho(v,c)}{1-p}}\right)
  \;\ge\;
  \gamma q\,
  \exp\!\left(-\frac{(a + \gamma)\Delta/q^{k-1}}{1-p}\right).
\]
Plugging the above back into \eqref{eq:outer}, we have
\begin{equation}\label{eq:outer2}
 \P[\mathcal{B}_v]
  \;\le\;
  \exp\left(
    -\gamma q\,
    \exp \left(-\frac{(a +\gamma)\Delta/q^{k-1}}{1-p}\right)
  \right).
\end{equation}
Plugging in $q =  \left ( \frac{(k - 1)(a + 2\gamma)}{1-p} \frac{\Delta}{\log \Delta} \right )^{\frac{1}{k-1}}$, we obtain
\[
\label{eq:cond}
\frac{(a + \gamma)\Delta}{(1-p)q^{k-1}}
\;=
\frac{(a + \gamma)\log \Delta}{(k - 1)(a + 2\gamma)}.
\]
It follows that
\[
\exp\!\left(-\frac{(a + \gamma)\Delta}{(1-p)q^{k-1}}\right)
\;=
\exp\!\left(-\frac{(a + \gamma)\log \Delta}{(k-1)(a + 2\gamma)}\right)
=
\Delta^{-\frac{a + \gamma}{(k-1)(a + 2\gamma)}}.
\]
Substituting this into \eqref{eq:outer2} gives
\begin{align*}
   \P[\mathcal{B}_v]
&\le\;
\exp\!\left(
  -\gamma q \,\Delta^{-\frac{a + \gamma}{(k-1)(a + 2\gamma)}}
\right), \\
& = \exp \left( -\gamma\left(\frac{(k-1)(a + 2\gamma)}{(1-p)\log \Delta}\right)^{\!1/(k-1)}
\Delta^{\frac{1}{k-1} -\frac{a + \gamma}{(k-1)(a + 2\gamma)}} \right) \\
& \le \exp \left( -\gamma\left(\frac{(k-1) \,(a + 2\gamma)}{\log \Delta}\right)^{\!1/(k-1)}
\Delta^{\frac{\gamma}{(k-1)(a + 2\gamma)}}\right).
\end{align*}
Let
\(
C\coloneqq \gamma\left((k-1)(a + 2\gamma)\right)^{1/(k-1)}
\) be a positive constant depending only on \(a,k,\gamma\). 
We have
\[
\P[\mathcal{B}_v]
\le\ \exp \left( - C 
\frac{\Delta^{\frac{\gamma}{(k-1)(a+2\gamma)}}}{(\log \Delta)^{\frac{1}{k-1}}} 
\right) \leq
\exp\left(-\Delta^{\gamma/(3k)}\right),
\]
for $\Delta$ sufficiently large, completing the proof.
\end{proof}

\subsection{Existence of $L^*$}

In this section, we will prove Proposition~\ref{prop: Existence Lemma}, which guarantees that for every vertex $v \in V_k$ there exists a subset 
$L^{*}\subseteq L(v)$ satisfying the hypotheses of Proposition~\ref{prop: Existence Lemma} for $a = 4/5$. For the reader's convenience, we restate the proposition here.

\begin{prop*}[{Restatement of Proposition~\ref{prop: Existence Lemma}}]
Let $a = \frac45$, and let $\gamma,\Delta,q$ be as in Proposition~\ref{prop: Probability Lemma}.
Suppose that $|L(u)| \geq q$ for each $u \in V(H)$.
For each vertex $v \in V_k$, there exists a subset $L^{*} \subseteq L(v)$ with $|L^{*}| \ge \gamma q$ such that
\[
  \frac{1}{|L^{*}|}\sum_{c \in L^{*}} \rho(v,c)
  \;\le\; 
  \Delta\left(\frac{a + \gamma}{q^{\,k-1}}\right).
\]
\end{prop*}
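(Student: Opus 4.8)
The plan is to reduce the statement to a clean combinatorial inequality and then exploit the monotone (``staircase'') shape of the distribution \eqref{eq: prob dist} on $V_1$. Fix $v\in V_k$ and write $L(v)=\{c_1<c_2<\dots<c_q\}$. For an edge $e\in E(v)$ let $u_1^{(e)}$ be its vertex in $V_1$; recalling \eqref{eq: prob dist}, one has $\P_e(c_i)=\frac{8}{(5q-3)q^{k-2}}\,g(\phi_e(i))\,\mathbf{1}[i\in T_e]$, where $g(r):=1-\frac{3r}{4q}\in(0,1]$, $\phi_e(i):=I(L(u_1^{(e)}),c_i)$, and $T_e:=\{i:c_i\in L(w)\text{ for all }w\in e\setminus\{v\}\}$ is the set of colours capable of making $e$ problematic. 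The key observation I would use is order-preservation: if $i\in T_e$ then every $c_{i'}$ with $i'\in T_e$ and $i'<i$ lies in $L(u_1^{(e)})$ and is smaller than $c_i$, so $\phi_e(i)\ge r_e(i):=|\{i'\in T_e:i'\le i\}|$; since $g$ is decreasing this gives $\rho(v,c_i)\le \frac{8}{(5q-3)q^{k-2}}\,\widetilde\rho_i$ with $\widetilde\rho_i:=\sum_{e:\,i\in T_e}g(r_e(i))$. As $\frac{8}{(5q-3)q^{k-2}}=\frac{8}{5q^{k-1}}(1+o(1))$, it suffices (for $\Delta$ large) to produce $L^*\subseteq L(v)$ with $|L^*|\ge\gamma q$ and $\frac1{|L^*|}\sum_{c_i\in L^*}\widetilde\rho_i\le\frac{(5q-3)(4/5+\gamma)}{8q}\,\Delta$, a quantity which is $\approx\tfrac{\Delta}{2}$.

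I would record two facts about $\widetilde\rho$. (i) Since $r_e(i)=|T_e\cap[1,i]|$ whenever $i\in T_e$ and $g>0$, we have $\widetilde\rho_i\le\Phi(i)$, where $\Phi(j):=\sum_{e\in E(v)}g\bigl(|T_e\cap[1,j]|\bigr)$ is \emph{non-increasing} in $j$ with $\Phi(0)=\Delta$. (ii) For any $j$, the set $T_e\cap[1,j]$ is an initial segment of $T_e$, so $\sum_{i\le j}\widetilde\rho_i=\sum_e\sum_{r=1}^{r_e}g(r)$ with $r_e:=|T_e\cap[1,j]|$; writing $S:=\sum_e r_e$ (so $\Phi(j)=\Delta-\frac{3}{4q}S$, i.e.\ $S=\frac{4q}{3}(\Delta-\Phi(j))$) and using $\sum_e r_e^2\ge S^2/\Delta$ (Cauchy--Schwarz, or Theorem~\ref{theo:Jensen} applied to $r\mapsto r^2$), a short calculation gives the prefix bound $\sum_{i\le j}\widetilde\rho_i\le\frac{2q}{3\Delta}\bigl(\Delta^2-\Phi(j)^2\bigr)$.

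With these in hand, if $\gamma\ge\tfrac15$ I would simply take $L^*=L(v)$, since then the overall average $\frac1q\sum_c\rho(v,c)\le\frac{\Delta}{q^{k-1}}\le\Delta\frac{4/5+\gamma}{q^{k-1}}$. For $\gamma<\tfrac15$, set $\tau:=\Delta/2$, $j_0:=\lfloor(1-\gamma)q\rfloor$, and split on $\Phi(j_0)$. If $\Phi(j_0)\le\tau$, take $L^*:=\{c_i:j_0<i\le q\}$; then $|L^*|=\lceil\gamma q\rceil\ge\gamma q$ and, since $\Phi$ is non-increasing, $\widetilde\rho_i\le\Phi(i)\le\Phi(j_0)\le\tau$ for each $c_i\in L^*$. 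If $\Phi(j_0)>\tau$, then by (ii) $\sum_{i\le j_0}\widetilde\rho_i\le\frac{2q}{3\Delta}(\Delta^2-\tau^2)=\frac{q\Delta}{2}$, so taking $L^*$ to be the $\lceil\gamma q\rceil$ colours $c_i$ with $i\le j_0$ of smallest $\widetilde\rho_i$ gives average at most $\frac{1}{j_0}\cdot\frac{q\Delta}{2}\le\frac{\Delta}{2(1-\gamma)}(1+o(1))$. In both cases, multiplying by $\frac{8}{(5q-3)q^{k-2}}$ and using that $\frac{4}{5(1-\gamma)}<\frac45+\gamma$ for $0<\gamma<\tfrac15$ yields $\frac{1}{|L^*|}\sum_{c\in L^*}\rho(v,c)\le\Delta\frac{4/5+\gamma}{q^{k-1}}$ once $\Delta$ (hence $q$) is large enough. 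The constant $4/5$ enters only through the choice $\tau=\Delta/2$, via $\frac85\cdot\frac12=\frac45$.

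The main obstacle is precisely squeezing the constant down to $4/5$ rather than the $1$ produced by any purely first-moment argument (bounding $\sum_c\rho(v,c)\le\Delta/q^{k-2}$ and averaging): one has to genuinely use the order-preservation $\phi_e(i)\ge r_e(i)$, and to use it in \emph{two} ways simultaneously --- through the non-increasing envelope $\Phi$, so that the top colours of $L(v)$ are cheap unless many edges are ``pushed up'', and through the prefix-sum bound (ii), so that if many edges are pushed up then the bottom colours become cheap. Secondary care is needed with floors and ceilings and with the gap between $\frac{8}{(5q-3)q^{k-2}}$ and $\frac{8}{5q^{k-1}}$, which is why the target carries the slack term $+\gamma$ and why $\Delta$ must be taken large.
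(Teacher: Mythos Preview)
Your argument is correct, and its skeleton matches the paper's: both hinge on the order-preservation $i_{u_1^e,c}\ge i_{e,c}$ (your $\phi_e(i)\ge r_e(i)$) together with a Jensen/Cauchy--Schwarz step on the squares $\sum_e r_e^2\ge(\sum_e r_e)^2/\Delta$, and both finish by a two-case split in which one case singles out the top $\gamma q$ colours of $L(v)$.

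The organisation, however, is genuinely different. The paper sets $z:=\tfrac{1}{\Delta}\sum_{e\in E(v)}|L_e|$ and splits on $z\lessgtr 2q/3$: when $z\le 2q/3$ it takes $L^*=L(v)$ and bounds $\sum_{c\in L_e}i_{u_1^e,c}>|L_e|^2/2$ followed by Jensen; when $z>2q/3$ it takes the top $\gamma q$ colours and uses the complementary inequality $i_{e,c}\ge |L_e|-(q-i_{v,c})$. You instead package everything through the non-increasing envelope $\Phi(j)=\sum_e g(|T_e\cap[1,j]|)$ and split on $\Phi(j_0)\lessgtr\Delta/2$: your ``top colours'' case corresponds to the paper's high-$z$ case (indeed $\Phi(q)=\Delta(1-\tfrac{3z}{4q})$, so $\Phi(q)\le\Delta/2\iff z\ge 2q/3$), and your prefix-sum bound $\sum_{i\le j}\widetilde\rho_i\le\tfrac{2q}{3\Delta}(\Delta^2-\Phi(j)^2)$ plays exactly the role of the paper's low-$z$ computation. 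What your route buys is a single object $\Phi$ carrying both the monotone pointwise bound and the prefix-sum bound, so the dichotomy feels more intrinsic; the paper's route is a bit more hands-on but makes the two competing effects (few common colours versus high indices) more transparent to a first-time reader. Both produce the same constant $4/5$ for the same reason: the quadratic $x\mapsto x-\tfrac{3x^2}{8q\Delta}$ (equivalently $\tfrac{2q}{3\Delta}(\Delta^2-\Phi^2)$) is balanced against the linear envelope exactly at the level $\Delta/2$, and $\tfrac{8}{5}\cdot\tfrac12=\tfrac45$.
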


\begin{proof}

For an edge $e \in E$, let the set of colors common to all vertices of $e$ be denoted by
\[
    L_e \coloneqq \bigcap_{u\in e} L(u).
\]
For convenience, we introduce the shorthand
\[
    i_{u,c} \coloneqq I(L(u), c),
    \qquad
    \text{and}
    \qquad
    i_{e,c} \coloneqq I(L_e, c).
\]
Clearly, for each $u \in e$, as $L_e \subseteq L(u)$, 
\[
    i_{e,c} \le i_{u,c} \qquad \text{for all } c \in L_e.
\]

Now fix a vertex $v\in V_k$. For each edge $e\in E(v)$, write its vertices as
\[
    e=\{u_1^e,u_2^e,\dots,u_k^e\}, \qquad \text{where } u_i^e \in V_i \text{ for each } i.
\] 
Observe that $u_k^e = v$.
Consider the probability that an edge $e \in E(v)$ is problematic with respect to the color \(c\). We have
\begin{align}\label{eq: problematic prob ub}
    \P_e(c) = \prod_{u\in e} \P_{u,c} = \left\{\begin{array}{cc}
        \frac{\frac{8}{5}}{q^{k-1}\left(1-\frac{3}{5q}\right)}\left(1 - \frac{3}{4q}i_{u_1^e,c}\right) & \text{if } c \in L_e; \\[12pt]
        0 & \text{otherwise}.
    \end{array}\right.
\end{align}

To assist with the remainder of the proof, we define the average size of \(|L_e|\) across all edges in $E(v)$ as \[
z \;=\; \frac{1}{\Delta}\sum_{e\in E(v)}|L_e|.
\]
In other words, $z$ represents the size of $L_e$ for the average edge $e \in E(v)$. Note that any $e \in E(v)$ can only be problematic with respect to colors in $L_e$. We split the analysis into two cases based on the value of $z$ over its full range $[0,q]$.

First, we consider the case that $z$ is \textit{small}. In this case, the average edge in $E(v)$ can be problematic with respect to only a few colors, and so the probability that it is problematic with respect to some color is \textit{small}. As a result, the expected number of edges in $E(v)$ which are problematic with respect to colors in $L(v)$ is sufficiently small. In this regime, we choose $L^*=L(v)$ and prove that our choice of $L^*$ satisfies the desired condition.

\begin{claim}\label{claim: low z}
Suppose $z \le \frac{2q}{3}$. 
Then for $L^* = L(v)$, we have
\[
    \frac{1}{|L^*|}\sum_{c \in L^*}\rho(v,c) 
    \leq \Delta\left(\frac{4/5 + \gamma}{q^{k-1}}\right).
\]
\end{claim}
\begin{claimproof}
Recall that for each edge \(e \in E(v)\), its vertices are written as \(\{u_1^e, \dots , u_k^e \}\) such that \(u_i^e \in V_i\) for each $i$, so that $u^e_k = v$. We have 
\[
\sum_{c\in L_e} i_{u_1^e,c} \ge\sum_{c\in L_e}i_{e,c} = \frac{|L_e|(|L_e|+1)}{2}
> \frac{|L_e|^2}{2}.
\]
Since $f(x) = x^2$ is convex, by \hyperref[theo:Jensen]{Jensen's Inequality} (Theorem \ref{theo:Jensen}) we have
\[\sum_{e\in E(v)}{|L_e|^2} \ge \Delta\left(\sum_{e\in E(v)}\frac{|L_e|}{\Delta}\right)^2 = \Delta\,z^2.\]
We apply \eqref{eq: problematic prob ub} to obtain
\begin{align*}
    \frac{1}{|L^*|}\sum_{c \in L^*}\rho(v,c) 
    &= \frac{1}{|L(v)|} \sum_{c\in L(v)} \sum_{e \in E(v)} \P_e(c), \\
    &= \frac{1}{q} \sum_{e \in E(v)} \sum_{c\in L_e}\frac{\frac{8}{5}}{q^{k-1}\left(1-\frac{3}{5q}\right)}\left(1 - \frac{3}{4q}i_{u_1^e,c}\right) ,\\
    &=\frac{\frac{8}{5}}{q^{k-1}\left(1-\frac{3}{5q}\right)}\cdot\frac1q \sum_{e \in E(v)} \sum_{c\in L_e} \left(1 - \frac{3}{4q}i_{u_1^e,c}\right) ,\\
    &= \frac{\frac{8}{5}}{q^{k-1}\left(1-\frac{3}{5q}\right)}\cdot\frac1q \left( z \Delta - \frac{3}{4q} \sum_{e \in E(v)} \sum_{c \in L_e} i_{u_1^e, c}\right), \\
    &< \frac{\frac{8}{5}}{q^{k-1}\left(1-\frac{3}{5q}\right)} \left( \frac{z \Delta}{q} - \frac{3}{4q^2} \sum_{e\in E(v)}\frac{|L_e|^2}{2}\right), \\
    &\leq \frac{\frac{8}{5}}{q^{k-1}\left(1-\frac{3}{5q}\right)} \left(\frac{z \Delta}{q} - \frac{3\Delta\,z^2}{8q^2} \right).
\end{align*}
This function is increasing in the interval $z \in [0,\frac{2q}{3}]$, so it is maximized at $z = \frac{2q}{3}$.
It follows that
\[
    \frac{1}{|L^*|}\sum_{c \in L^*}\rho(v,c) 
    \leq  \frac{\frac{8}{5}}{q^{k-1}\left(1-\frac{3}{5q}\right)} \left( \frac{2\Delta}{3} - \frac{\Delta}{6}\right) 
    =  \frac{\frac{8}{5}}{q^{k-1}\left(1-\frac{3}{5q}\right)}\ \frac\Delta 2 \leq \Delta\left(\frac{4/5 + \gamma}{q^{k-1}}\right),
\]
for $\Delta$ sufficiently large.
\end{claimproof}

We now tackle the case where $z > 2q/3$, i.e., $|L_e|$ is \textit{large} on average. Before giving the formal argument, we briefly explain the intuition. Consider an average edge $e \in E(v)$. Since $| L_e |$ is large and since $L_e \subseteq L(u_1^e) \cap L(v)$, it follows that
$\lvert L(u_1^e) \cap L(v) \rvert \ge \lvert L_e \rvert$ is also large. A large intersection $L(u_1^e) \cap L(v)$ forces many of the high-index colors of $L(v)$ to also appear with high index in $L(u_1^e)$. In particular, these colors have a small probability of being assigned to $u_1^e$, implying that $e$ has a small probability of being problematic with respect to high-index colors of $L(v)$. 
Consequently, when $z > 2q/3$, the expected number of problematic edges with respect to high-index colors of $L(v)$ remains \textit{small}. To formalize this idea, we choose the set of $\gamma q$ highest index colors in $L(v)$ to be $L^*$ and prove that it satisfies the desired condition. 

\begin{claim}\label{claim: high z}
    Suppose $z > \frac{2q}{3}$. 
    Then for $L^*  \coloneqq \{c \in L(v) \,:\, i_{v, c} \ge (1 -\gamma)q\}$, we have
    \[
        \frac{1}{|L^*|}\sum_{c \in L^*}\rho(v,c) 
        \leq \Delta\left(\frac{4/5 + \gamma}{q^{k-1}}\right).
    \]
\end{claim}

\begin{claimproof}
Recall that for each edge \(e \in E(v)\), its vertices are written as \(\{u_1^e, \dots , u_k^e \}\) such that \(u_i^e \in V_i\) for each $i$, so that $u_k^e = v$. Note the following:
\begin{align*}
    \sum_{c\in L^*}i_{v,c} = \frac{|L^*|(2q - |L^*| + 1)}{2} = \frac{2\gamma q^2 - (\gamma q)^2 + \gamma q}{2} .
\end{align*} 
For any $c \in L_e$, since \(L_e \subseteq L(v)\), the set of colors in \(L_e\) that are larger than
\(c\) is contained in the set of colors in \(L(v)\) that are larger than \(c\).
Consequently, we have
\[
  |L_e| - i_{e,c} \le |L(v)| - i_{v,c},\qquad \forall c \in L_e
\]
and hence
\[
  i_{e,c} \ge |L_e| - (q - i_{v,c}),\qquad \forall c \in L_e.
\]
To simplify computations, we let $I(L, c) = 4q/3$ for a list-color pair $(L, c)$ satisfying $c \notin L$ (note that if $L = L(u)$ for some $u\in V_1$, this choice sets $\P_{u,c} = 0$). 
In particular, we let $i_{e, c} = 4q/3$ for $c \notin L_e$.
With this in mind and noting that $i_{v, c} \leq q$, we may extend the above inequality to 
\[
  i_{e,c} \ge |L_e| - (q - i_{v,c}),\qquad \forall c \in L(v).
\]
We obtain
\begin{align*}
    \sum_{c\in L^*}i_{e,c}  
    \geq \sum_{c\in L^*}(|L_e| - (q - i_{v,c})) 
    &= |L_e| |L^*| - q |L^*| + \sum_{c \in L^*} i_{v,c} \\
    &= \gamma q |L_e| - \gamma q^2 + \frac{2\gamma q^2 - (\gamma q)^2 + \gamma q}{2} \\
    &\ge \frac{\gamma q \left(2|L_e| - \gamma q \right)}{2}.
\end{align*}
Recall that $i_{u_1^e, c} \geq i_{e, c}$. Applying \eqref{eq: problematic prob ub}, we have
\begin{align*}
    \frac{1}{|L^*|}\sum_{c \in L^*}\rho(v,c)
    &= \frac{1}{|L^*|}\sum_{c\in L^*}\sum_{e\in E(v)}\P_e(c)\\
    &\leq \frac{1}{|L^*|}\sum_{c\in L^*}\sum_{e\in E(v)}
    \frac{\frac{8}{5}}{q^{k-1}\left(1-\frac{3}{5q}\right)}
    \left(1-\frac{3}{4q}i_{u_1^e,c}\right)\\
    &= \frac{\frac{8}{5}}{q^{k-1}\left(1-\frac{3}{5q}\right)}\cdot \frac{1}{|L^*|}
    \sum_{c\in L^*}\sum_{e\in E(v)}
    \left(1-\frac{3}{4q}i_{u_1^e,c}\right)\\
    &= \frac{\frac{8}{5}}{q^{k-1}\left(1-\frac{3}{5q}\right)}\cdot \frac{1}{|L^*|}
    \left(\Delta|L^*|-\sum_{c\in L^*}\sum_{e\in E(v)}\frac{3}{4q}i_{u_1^e,c}\right)\\
    &\le \frac{\frac{8}{5}}{q^{k-1}\left(1-\frac{3}{5q}\right)}\cdot \frac{1}{\gamma q}
    \left(\Delta \gamma q -\sum_{e\in E(v)}\sum_{c\in L^*}\frac{3}{4q}i_{e,c}\right)\\
    &\le \frac{\frac{8}{5}}{q^{k-1}\left(1-\frac{3}{5q}\right)}\cdot \frac{1}{\gamma q}
    \left(\Delta\gamma q-\frac{3}{4q}\sum_{e\in E(v)} \frac{\gamma q \left(2|L_e| - \gamma q \right)}{2}\right)\\
    &= \frac{\frac{8}{5}}{q^{k-1}\left(1-\frac{3}{5q}\right)}
    \left(\Delta -\frac{3}{8q}\,\sum_{e\in E(v)}\left(2|L_e|-\gamma q\right)\right)\\
    &= \frac{\frac{8}{5}}{q^{k-1}\left(1-\frac{3}{5q}\right)}
    \left(\Delta-\frac{3}{8q}\left(2\Delta z-\gamma q\,\Delta\right)\right).
\end{align*}

Applying the bound $z > \frac{2q}{3}$, we obtain
\begin{align*}
\frac{1}{|L^*|}\sum_{c \in L^*}\rho(v,c) 
&< \frac{\frac{8}{5}}{q^{k-1}\left(1-\frac{3}{5q}\right)}\left(\Delta - \frac{3}{8q}\left(\frac{4\Delta q}{3} - \gamma q\Delta\right)\right) \\
&=\frac{\frac{8}{5}}{q^{k-1}\left(1-\frac{3}{5q}\right)} \left( \Delta - \frac{\Delta}{2} + \frac{3\gamma\Delta}{8} \right) \\
&= \frac{\frac{8}{5}}{q^{k-1}\left(1-\frac{3}{5q}\right)}\left(\frac{1}{2} + \frac{3\gamma}{8}\right)\Delta\\
&\le \frac{\frac{8}{5}}{q^{k-1}}\left(\frac{1+ \gamma}{2} \right)\Delta \\ 
&\leq \Delta\left(\frac{4/5 + \gamma}{q^{k-1}}\right),
\end{align*}
for $\Delta$ sufficiently large, completing the proof.
\end{claimproof}

By Claims~\ref{claim: low z} and \ref{claim: high z}, in the interval $z =[0,q]$ (which is the entire feasible range of $z$), at least one of $L^* = L(v)$ or $L^* = \{c \in L(v) \,:\, i_{v, c} \geq (1-\gamma)q\}$
satisfies
\[
  \frac{1}{|L^*|}\sum_{c \in L^*}\rho(v,c) \leq \Delta\frac{4/5 + \gamma}{q^{k-1}},
\]
completing the proof of Proposition~\ref{prop: Existence Lemma}.
\end{proof}

\printbibliography

@book{AlonSpencer,
	author = {N. Alon and J.H. Spencer},
	title = {The Probabilistic Method},
	date = {2000},
	edition = {2},
	publisher = {John Wiley {\&} Sons},
}

@article{Harris,
	author = {T.E. Harris},
	title = {A lower bound for the critical probability in a certain percolation process},
	journaltitle = {Math. Proc. Cambridge Phil. Soc.},
	date = {1960},
	volume = {56},
	number = {1},
	pages = {13--20},
}

@article{Kleitman,
	author = {D.J. Kleitman},
	title = {Families of non-disjoint subsets},
	journaltitle = {J. Combin. Theory},
	date = {1966},
	volume = {1},
	number = {1},
	pages = {153--155},
}

@inproceedings{brooks1941colouring,
  title={On colouring the nodes of a network},
  author={R. L. Brooks},
  booktitle={Mathematical Proceedings of the Cambridge Philosophical Society},
  volume={37},
  number={2},
  pages={194--197},
  year={1941},
  organization={Cambridge University Press}
}

@article{reed1999strengthening,
  title={A strengthening of Brooks' theorem},
  author={B. Reed},
  journal={Journal of Combinatorial Theory, Series B},
  volume={76},
  number={2},
  pages={136--149},
  year={1999},
  publisher={Academic Press}
}

@ARTICLE{AKSConjecture,
    AUTHOR = "N. Alon and M. Krivelevich and B. Sudakov",
    TITLE = "{Coloring graphs with sparse neighborhoods}",
    JOURNAL = "J. Combin. Theory",
    series = {B},
    YEAR = "1999",
    volume = {77},
    pages = {73--82},
}

@article{BollobasIndependence,
    author = {B. Bollob{\'{a}}s},
    title = {The independence ratio of regular graphs},
    journaltitle = {Proc. Amer. Math. Soc.},
    date = {1981},
    volume = {83},
    number = {2},
    pages = {433--436},
}

@report{Joh_triangle,
    author = {A. Johansson},
    title = {Asymptotic choice number for triangle free graphs},
    type = {Technical Report 91--95},
    institution = {DIMACS},
    date = {1996},
}

@unpublished{DKPS,
    author = {E. Davies and R.J. Kang and F. Pirot and J.-S. Sereni},
    title = {Graph structure via local occupancy},
    howpublished = {\url{https://arxiv.org/abs/2003.14361} (preprint)},
    date = {2020},
}

@article{anderson2023colouring,
  title={Colouring graphs with forbidden bipartite subgraphs},
  author={J. Anderson and A. Bernshteyn and A. Dhawan},
  journal={Combinatorics, Probability and Computing},
  volume={32},
  number={1},
  pages={45--67},
  year={2023},
  publisher={Cambridge University Press}
}

@article{anderson2025coloring,
  title={Coloring graphs with forbidden almost bipartite subgraphs},
  author={Anderson, James and Bernshteyn, Anton and Dhawan, Abhishek},
  journal={Random Structures \& Algorithms},
  volume={66},
  number={4},
  pages={e70012},
  year={2025},
  publisher={Wiley Online Library}
}

@article{bernshteyn2019johansson,
  title={The Johansson-Molloy theorem for DP-coloring},
  author={A. Bernshteyn},
  journal={Random Structures \& Algorithms},
  volume={54},
  number={4},
  pages={653--664},
  year={2019},
  publisher={Wiley Online Library}
}

@article{frieze2013coloring,
  title={Coloring simple hypergraphs},
  author={A. Frieze and D. Mubayi},
  journal={Journal of Combinatorial Theory, Series B},
  volume={103},
  number={6},
  pages={767--794},
  year={2013},
  publisher={Elsevier}
}

@article{cooper2016coloring,
  title={Coloring sparse hypergraphs},
  author={J. Cooper and D. Mubayi},
  journal={SIAM Journal on Discrete Mathematics},
  volume={30},
  number={2},
  pages={1165--1180},
  year={2016},
  publisher={SIAM}
}

@unpublished{li2022chromatic,
  title={The chromatic number of triangle-free hypergraphs},
  author={L. Li and L. Postle},
howpublished = {\url{https://arxiv.org/abs/2202.02839} (preprint)},
  year={2022}
}

@article{cambie2022independent,
  title={Independent transversals in bipartite correspondence-covers},
  author={S. Cambie and R.J. Kang},
  journal={Canadian Mathematical Bulletin},
  volume={65},
  number={4},
  pages={882--894},
  year={2022},
  publisher={Canadian Mathematical Society}
}

@article{alon2021asymmetric,
  title={Asymmetric list sizes in bipartite graphs},
  author={N. Alon and S. Cambie and R.J. Kang},
  journal={Annals of Combinatorics},
  volume={25},
  number={4},
  pages={913--933},
  year={2021},
  publisher={Springer}
}

@article{alon1998choice,
  title={The choice number of random bipartite graphs},
  author={N. Alon and M. Krivelevich},
  journal={Annals of Combinatorics},
  volume={2},
  number={4},
  pages={291--297},
  year={1998},
  publisher={Springer}
}

@article{meroueh2019list,
  title={List colorings of multipartite hypergraphs},
  author={A. M{\'e}roueh and A. Thomason},
  journal={Random Structures \& Algorithms},
  volume={55},
  number={4},
  pages={950--979},
  year={2019},
  publisher={Wiley Online Library}
}

@article{haxell2010list,
  title={List coloring hypergraphs},
  author={P. Haxell and J. Verstraete},
  journal={the electronic journal of combinatorics},
  pages={R129--R129},
  year={2010}
}

@article{erdos1979choosability,
  title={Choosability in graphs},
  author={P. Erd\H{o}s and A.L. Rubin and H. Taylor},
  journal={Congr. Numer},
  volume={26},
  number={4},
  pages={125--157},
  year={1979}
}

@unpublished{bradshaw2024bipartite,
  title={Bipartite graphs are $(\frac{4}{5}-\varepsilon)\frac{\Delta}{\log\Delta}$-choosable},
  author={Bradshaw, Peter and Mohar, Bojan and Stacho, Ladislav},
  howpublished = {\url{https://arxiv.org/abs/2409.01513} (preprint)},
  year={2024}
}

@article{dhawan2025list,
  title={List colorings of $ k $-partite $ k $-graphs},
  author={Dhawan, Abhishek},
  journal={The Electronic Journal of Combinatorics},
  volume={32},
  issue ={2},
  pages = {P2.16},
  doi = {https://doi.org/10.37236/12657},
  year={2025}
}

@article{Achlioptas,
	author = {D. Achlioptas and A. Coja-Oghlan},
	title = {Algorithmic barriers from phase transitions},
	journaltitle = {IEEE Symposium on Foundations of Computer Science (FOCS)},
	date = {2008},
	pages = {793--802},
	addendum = {Full version: \url{https://arxiv.org/abs/0803.2122}},
}

@article{Zdeborova,
	author = {L. Zdeborov{\'{a}} and F. Krz{\k{a}}ka{\l}a},
	title = {Phase transitions in the coloring of random graphs},
	journaltitle = {Phys. Rev. E},
	date = {2007},
	volume = {76},
	pages = {031131},
}

@unpublished{martinsson2021simplified,
  title={A simplified proof of the Johansson-Molloy Theorem using the Rosenfeld counting method},
  author={Martinsson, Anders},
  howpublished = {\url{https://arxiv.org/abs/2111.06214} (preprint)},
  year={2021}
}

@unpublished{hurley2021first,
  title={A first moment proof of the Johansson-Molloy theorem},
  author={Hurley, EOIN and Pirot, FRAN{\c{C}}OIS},
  howpublished = {\url{https://arxiv.org/abs/2109.15215} (preprint)},
  year={2021}
}

@article{karp1976probabilistic,
  title={The probabilistic analysis of some combinatorial search algorithms.},
  author={Karp, Richard M.},
  year={1976}
}

@unpublished{dhawan2024low,
  title={The low-degree hardness of finding large independent sets in sparse random hypergraphs},
  author={Dhawan, Abhishek and Wang, Yuzhou},
  howpublished = {\url{https://arxiv.org/abs/2404.03842} (preprint)},
  year={2024}
}

@inproceedings{iliopoulos2021improved,
  title={Improved Bounds for Coloring Locally Sparse Hypergraphs},
  author={Iliopoulos, Fotis},
  booktitle={Approximation, Randomization, and Combinatorial Optimization. Algorithms and Techniques (APPROX/RANDOM 2021)},
  pages={1--39},
  year={2021},
  organization={Schloss Dagstuhl--Leibniz-Zentrum f{\"u}r Informatik}
}

@unpublished{bradshaw2025toward,
  title={Toward Vu's conjecture},
  author={Bradshaw, Peter and Dhawan, Abhishek and Methuku, Abhishek and Wigal, Michael C},
  howpublished = {\url{https://arxiv.org/abs/2508.16818} (preprint)},
  year={2025}
}

@article{bonamy2022bounding,
  title={Bounding $\chi$ by a fraction of $\Delta$ for graphs without large cliques},
  author={Bonamy, Marthe and Kelly, Tom and Nelson, Peter and Postle, Luke},
  journal={Journal of Combinatorial Theory, Series B},
  volume={157},
  pages={263--282},
  year={2022},
  publisher={Elsevier}
}

@unpublished{verstraete2024independent,
  title={Independent sets in hypergraphs},
  author={Verstraete, Jacques and Wilson, Chase},
  howpublished = {\url{https://arxiv.org/abs/2409.19908} (preprint)},
  year={2024}
}

@article{ajtai1982extremal,
  title={Extremal uncrowded hypergraphs},
  author={Ajtai, Mikl{\'o}s and Koml{\'o}s, J{\'a}nos and Pintz, Janos and Spencer, Joel and Szemer{\'e}di, Endre},
  journal={Journal of Combinatorial Theory, Series A},
  volume={32},
  number={3},
  pages={321--335},
  year={1982},
  publisher={Elsevier}
}

@book{hardy1952inequalities,
  title={Inequalities},
  author={Hardy, Godfrey Harold and Littlewood, John Edensor and P{\'o}lya, George},
  year={1952},
  publisher={Cambridge university press}
}

\end{document}